\definecolor{mygray}{gray}{0.5}
\numberwithin{equation}{section}
\newtheorem{theorem}{Theorem}[section]
\newtheorem{lemma}[theorem]{Lemma}
\newtheorem{corollary}[theorem]{Corollary}
\newtheorem{proposition}[theorem]{Proposition}
\theoremstyle{definition}
\def\s{\sigma}
\def\Zd{{\Z^d}}
\def\({{\Bigl(}}
\def\){{\Bigr)}}
\def\one{{\mathbf 1}}
\def\square{\ifmmode\sqr\else{$\sqr$}\fi}
\def\sqr{\vcenter{
         \hrule height.1mm
         \hbox{\vrule width.1mm height2.2mm\kern2.18mm\vrule width.1mm}
         \hrule height.1mm}}                  
\def\qed{\hfill \square}
\def\R{{\mathbb R}}
\def\Z{{\mathbb Z}}
\def\N{{\mathbb N}}
\def\cL{{\mathcal{L}}}
\def\cG{{\mathcal{G}}}
\def\cI{{\mathcal{I}}}
\def\cN{{\mathcal{N}}}
\def\bA{{\mathbf{A}}}
\def\bD{{\mathbf{D}}}
\def\bK{{\mathbf{K}}}
\def\bA{{\mathcal{A}}}
\def\bD{{\mathcal{D}}}
\def\bK{{\mathcal{K}}}
\def\rI{{\mathrm{I}}}
\def\uU{{V}}
\def\one{{\mathbf 1}}
\def\s{\sigma}
\def\a{{\alpha}}
\title{Finite cycle Gibbs measures on permutations of $\Zd$}
\author{In\'es Armend\'ariz\thanks{Departamento de Matem\'atica, Universidad de Buenos Aires, Argentina. iarmend@dm.uba.ar},
Pablo A. Ferrari\thanks{Departamento de Matem\'atica, Universidad de Buenos Aires, Argentina, and Instituto de Matem\'atica e Estat\'istica, Universidade de S\~ao Paulo, Brazil. pferrari@dm.uba.ar},
Pablo Groisman\thanks{Departamento de Matem\'atica, Universidad de Buenos Aires and IMAS-CONICET, Buenos Aires, Argentina. pgroisma@dm.uba.ar}, 
Florencia Leonardi\thanks{Instituto de Matem\'atica e Estat\'istica, Universidade de S\~ao Paulo, Brazil. leonardi@ime.usp.br}
}
\date{}
\begin{document}

\sloppy
\maketitle

\begin{abstract} 
\noindent We consider Gibbs distributions on the set of permutations
of $\Z^d$ associated to the 
Hamiltonian $H(\sigma):=\sum_{x} \uU(\sigma(x)-x)$, 
where $\sigma$ is a permutation and $\uU:\Z^d\to\R$ is a strictly convex potential.
Call finite-cycle those permutations composed by finite cycles only. We give conditions on $\uU$ ensuring that for large enough temperature $\alpha>0$ 
there exists a unique infinite volume ergodic Gibbs measure
$\mu^\alpha$ concentrating mass on finite-cycle permutations; this measure is equal to the thermodynamic limit of
the specifications with identity boundary conditions.  We construct
$\mu^{\alpha}$ as the unique invariant measure of a Markov process on
the set of finite-cycle permutations that can be seen as a loss-network, a continuous-time birth and
death process of cycles interacting by exclusion, an approach proposed
by Fern\'andez, Ferrari and Garcia. Define $\tau_v$ as the shift permutation
$\tau_v(x)=x+v$. In the Gaussian case $\uU=\|\cdot\|^2$, we show that for each $v\in\Z^d$,
$\mu^\alpha_v$ given by $\mu^\alpha_v(f)=\mu^\alpha[f(\tau_v\cdot)]$ is an ergodic Gibbs measure
equal to the thermodynamic limit of the specifications with
$\tau_v$ boundary conditions. For a general potential $\uU$, we prove the existence of Gibbs measures
$\mu^\alpha_v$ when $\alpha$ is bigger than some $v$-dependent value.
\end{abstract}

\section{Introduction}

The Feynman-Kac representation of the Bose gas consists of trajectories of
interacting Brownian motions in a fixed time interval, which start and
finish at the points of a spatial point process \cite{feynman} . In order to attempt a rigorous analysis of the 
model,  several simplifications have been proposed over the years
\cite{feynman, fichtner, kikuchi,  MR0127331}. In the resulting model, the
starting and ending points belong to the
$d$-dimensional lattice, and the interaction is reduced to an exclusion condition on the
paths at the beginning and the end of the time interval. The state space is therefore the
set of permutations or bijections $\sigma:\Z^d\to\Z^d$. 

For a finite set
$\Lambda\subset\Z^d$, denote by $S_\Lambda$ the set of permutations $\sigma$ that reduce to 
the identity outside $\Lambda$, i.e., 
\begin{align}
  S_\Lambda:=
\{\sigma \in S\,:\,\sigma(x)=x \hbox{ if } x\notin\Lambda\}.
\end{align}
A function $\uU:\Z^d\to\R^+ \cup \{+\infty\}$ such that $\uU(\vec0)=0$ is called a \emph{potential}. We assume $\uU$ is strictly convex and define the Hamiltonian
\begin{equation}
  \label{gh}
 H_\Lambda(\sigma):=\sum_{x\in\Lambda}V(\sigma(x)-x), \quad\sigma\in S_\Lambda, 
\end{equation}
and  associated measure $G_\Lambda$,
\begin{equation}
  \label{mulambda}
  G_\Lambda(\sigma) := \frac1{Z_\Lambda} e^{-\alpha H_\Lambda(\sigma)},
\end{equation}
where $Z_\Lambda$ is a normalizing constant. The nonnegative parameter $\alpha$ is called the temperature; 
we omit
the dependence of $G_{\Lambda}$ on $\alpha$. We refer to the condition
$\sigma(x)=x$ if $x\notin\Lambda$ as an \emph{identity boundary
  condition}, and the finite volume measure $G_\Lambda$ associated to
a finite set $\Lambda\subset\Z^d$ is called a \emph{specification}. 

When the potential is Gaussian, $V(x)=\|x\|^2$, the value $e^{-\alpha\|\sigma(x)-x\|^2}$ is proportional to the
density at the site $\sigma(x)$ of a Gaussian distribution with mean
$x$ and variance $1/(2\alpha)$. Hence, $G_\Lambda$ is proportional to the joint
density of the arrival points at time $1/(2\alpha)$ of a family of 
independent Brownian motions started at each 
point in $\Lambda$, which are conditioned to arrive at distinct points
of $\Lambda$ at that time. This is the case arising from the Feynmann-Kac representation of the Bose gas.

Given permutations $\tau,\sigma$, define the composed permutation $\tau\sigma$
by  $(\tau\sigma)(x):=\tau(\sigma(x))$ and let $\mu\tau$ be the law of
$\tau\sigma$ when $\sigma$ is distributed according to $\mu$, that is 
\begin{equation}
\label{comp}
(\mu\tau)f=\int\mu(d\sigma) f(\tau\sigma), 
\end{equation}
for continuous real functions $f$.
For any vector
$v\in\Z^d$ denote by $\tau_v$ the shift permutation given by
\begin{equation}
  \label{xv1}
  \tau_v(x):=x+v\,.
\end{equation}
A permutation $\tau$ is called a \emph{ground state} if $\tau$ is a local minimum of the
Hamiltonian $H_{\Z^d}$. Since $\uU$ is strictly convex, the shift permutation $\tau_v$ is a ground state for any vector $v\in \Z^d$.  


\paragraph{Results} Our main results are the following. 

 {\it Identity boundary conditions. } In Theorem \ref{t1} we define a function $\alpha^*(\uU)$ such that when it is finite,
 for any $\alpha>\alpha^*(\uU)$,
there exists an ergodic Gibbs measure $\mu$ equal to the thermodynamic limit of the specifications with identity boundary conditions at temperature $\alpha$. The measure $\mu$  concentrates on finite-cycle permutations. 

 {\it Shift boundary conditions. } In Theorem \ref{t2} we fix $v\in\Z^d$ and extend the results of Theorem \ref{t1} to $\tau_v$-boundary conditions. That is, we define $\alpha^*_v(V)$, and assuming that it is finite, we show that for any temperature $\alpha>\alpha^*_v(V)$ there exists an ergodic  Gibbs measure $\mu_v$ associated to $\tau_v$ boundary 
 conditions such that 
 $\mu_v\tau_{-v}$ concentrates on finite-cycle permutations. 
 
 {\it Gaussian potential. }The physically relevant Gaussian potential $V(x)=\|x\|^2$ is covered by Theorem \ref{t1}; in this case the results for $\tau_v$-boundary conditions 
 follow directly from the observation that the specifications $G_{\Lambda|\tau_v}$ matching $\tau_v$ at the boundary satisfy $G_{\Lambda|\tau_v}=G_{\Lambda}\tau_{v}$, a relation that extends to the limit $\mu_v=\mu \tau_{v}$. In particular, here $\alpha_v^*(V)$ is the same for all $v\in \Z^d$, $\alpha^*_v(V)=\alpha^*(V)$.
 
The statements of these results establish the existence of Gibbs measures $\mu$ as a weak limit of specifications. In fact, we obtain pointwise limits. For instance, in the proof of Theorem \ref{t1} we construct a coupled family of permutations $(\zeta_\Lambda,\,\Lambda\subseteq \Z^d)$, each $\zeta_{\Lambda}$ distributed according to $G_{\Lambda}$ ($G_{\Z^d}=\mu$), 
 such that  for $x\in \Z^d$ the random variables $\zeta_\Lambda(x)$ converge almost surely  to $\zeta_{\Z^d}(x)$, as $\Lambda\nearrow\Z^d$.  

In Section \ref{s5} we compute bounds for $\alpha^*(V)$. In the Gaussian case these computations yield explicit bounds, see \eqref{explicit}.

\paragraph{Approach}
The proofs follow the approach of Fern\'andez, Ferrari and Garcia
\cite{MR1849182}, relying on the fact that the Peierls-contour representation of the low
temperature Gibbs measure for the Ising model is reversible for a loss network of contours interacting by exclusion. In the case of identity boundary conditions, instead
of contours, we consider the finite cycles that compose a permutation.
 Let $\Gamma$ be the set of finite cycles on $\Z^d$ with length larger than 1.  A finite-cycle permutation is represented as a ``gas'' of finite cycles in $\Gamma$, and the Gibbs measure can be described as a product of
independent Poisson random variables in the space $\{0,1,\dots\}^\Gamma$, conditioned to non overlapping of cycles, that is, each site $x\in\Z^d$ belongs to at most one cycle. This is automatically well defined in finite volume. We explicitly construct an infinite volume random configuration $\eta\in\{0,1\}^\Gamma$ with non overlapping cycles, $\eta(\gamma)=1$ means that the cycle $\gamma$ is present in the configuration $\eta$. This configuration is naturally associated to the permutation $\sigma$ composed by the cycles indicated by $\eta$. We then show that $\sigma$ is the almost sure limit as $\Lambda\nearrow\infty$ of permutations in $S_\Lambda$ distributed according to the specifications $G_\Lambda$.

The  loss network is a continuous-time Markov process $\eta_t\in\{0,1\}^\Gamma$, having as unique invariant measure the target Gibbs
measure. In this process, each cycle $\gamma$ attempts to appear independently at a rate $w(\gamma)$ defined later in \eqref{weight}, and $\gamma$ is allowed to join the existing configuration only if it does not overlap with the already present cycles. Cycles also die, independently, at rate 1. If $\a$ is sufficiently large this process is well defined in infinite volume, and a realization of the stationary process running for all  $t\in \R$ can be constructed as a function of a family of space-time Poisson processes, the usually called Harris graphical construction. The condition for the existence of the stationary process is related to the absence of oriented percolation of cycles in the space--time realization of a \emph{free process} in $\{0,1,\dots\}^\Gamma$, where all cycles
are allowed to be born, regardless whether they overlap with pre-existing cycles or not. The no-percolation
condition follows from dominating the percolation cluster by a
subcritical multitype branching process, a standard technique, see for instance
\cite{MR1707339}. The subcriticality condition for the branching process leads
to the condition $\alpha>\alpha^*$.

\paragraph{Background and further prospects}  The existence of a Gibbs measure concentrating on
finite-cycle permutations of $\Z^d$ was first obtained by Gandolfo, Ruiz and
Ueltschi \cite{MR2360227} in the large temperature regime for the Gaussian potential.
Recently, Betz \cite{Betz} gave a condition yielding tightness of the
specifications for a more general family of potentials, for any value of $\alpha$, 
his results imply that thermodynamic limits of specifications with
identity boundary conditions exist for any $\a>0$ and dimension
$d$. However, the problem of identifying these limits and their typical cycle
length remains open.

Biskup and Richthammer \cite{BR} consider the one dimensional case and strictly convex potentials satisfying some additional growth conditions. They prove that the set of all 
ground states associated to $H$ in \eqref{gh} is $\{\tau_v:v\in\Z\}$, $\tau_v(x)=x+v$ as in \eqref{xv1}, and 
that for each ground state $\tau_v$ and temperature $\alpha>0$ there is a Gibbs measure $\mu^{\a}_v$. Furthermore, they show that the set of extremal Gibbs measures is $\cG_{\alpha,e}=\{\mu^{\a}_v,\,
v\in\Z\}$, that is, each extremal Gibbs measure is associated to a ground
state. The measure $\mu^{\a}_v$ is translation invariant and supported on configurations having exactly $n$ infinite cycles. 
They also prove that for
any $\a>0$, the measure $\mu_v^\a$ has a regeneration property, which in the case $v=0$ entails the convergence as $\Lambda\nearrow\Z$ of the specifications $G_\Lambda$ with identity boundary conditions to
$\mu_0^{\a}$. In particular, this implies that for $d=1$, identity boundary
conditions lead to finite cycles, for all temperatures.

\emph{Infinite cycles. }
 In $d$--dimensions our results say that under identity boundary conditions, for
 $\alpha$ large enough,  the Gibbs measures concentrate on finite-cycle permutations. On the other hand, for the Gaussian potential and  small $\a$, Gandolfo, Ruiz and Ueltschi  \cite{MR2360227} performed numerical simulations of the 3-dimension specification
associated to a box $\Lambda$ yielding cycles with \emph{macroscopic} length, i.e., length that grows proportionally to the size of $\Lambda$.
More recent numerical results by Grosskinsky, Lovisolo and Ueltschi \cite{MR2903040} suggest that the scaled down size of these 
macroscopic cycles converges to a Poisson-Dirichlet distribution.  See also Goldschmidt, Ueltschi and Windridge
\cite{MR2868048} for a discussion relating cycle representations and fragmentation-coagulation models, where the Poisson-Dirichlet distributions appear naturally. The authors in \cite{MR2903040}
argue that the situation should be similar in higher dimensions, in contrast to
the case $d=2$. In 2--dimensions it is expected that the size of the 
cycles grows as $\Lambda\nearrow \Z^2$, but in this case the length would
not be macroscopic, a  conjecture that is supported by numerical simulations in
\cite{Betz, MR2360227}. The question remains whether a positive fraction
of sites belongs to these mesoscopic cycles. Betz  \cite{Betz} provides numerical evidence that for $d=2$ long cycles are fractals in the thermodynamic limit, and conjectures a connection to Schramm-Loewner evolution.

\begin{figure}[t]
\begin{center}
\psset{unit=0.8cm}
\begin{pspicture}(15,8)(0,0)  
\psset{linewidth=1.2pt,arrowsize=7pt,arrowlength=0.7, arrowinset=0.2}
\psset{linecolor=mygray}
\rput{0}(1,2){
\rput{0}(-0.28,-0.28){\psline(0,0)(5.5,0)(5.5,5.5)(0,5.5)(0,0)}
\multido{\nx=-0.5+0.5}{13}{
	\multido{\ny=-0.5+0.5}{13}{
		\pscircle[fillstyle=solid,fillcolor=black,linecolor=black](\nx,\ny){0.043}}}
\rput{0}(2.5,-1.3){ground state $\xi$}
\multido{\nx=-0.5+0.5,\ny=0.0+0.5}{12}{\psline{->}(\nx,2.5)(\ny,2.5)}
}
\rput{0}(9,2)
{
\rput{0}(-0.28,-0.28){\psline(0,0)(5.5,0)(5.5,5.5)(0,5.5)(0,0)}
\multido{\nx=-0.5+0.5}{13}{
	\multido{\ny=-0.5+0.5}{13}{
		\pscircle[fillstyle=solid,fillcolor=black,linecolor=black](\nx,\ny){0.043}}}
\rput{0}(2.5,-1.3){permutation $\sigma\colon G_{\Lambda|\xi}(\sigma)>0$}
\psline{->}(-0.5,2.5)(0,2.5)
\psline{->}(0,2.5)(0.5,3.5)
\pscurve{->}(0.5,3.5)(1,3.7)(1.5,3.5)
\psline{->}(1.5,3.5)(2,4)
\psline{->}(2,4)(2.5,3.5)
\pscurve{->}(2.5,3.5)(3,3.7)(3.5,3.5)
\pscurve{->}(3.5,3.5)(3.2,3)(2.8,2.5)(2.5,1.5)
\pscurve{->}(2.5,1.5)(3,1.7)(3.5,1.5)
\psline{->}(3.5,1.5)(4,2)
\psline{->}(4,2)(4.5,3)
\psline{->}(4.5,3)(5.5,2.5)
\psline{->}(0.5,0.5)(1,1.5)
\pscurve{->}(1,1.5)(1.25,1.7)(1.5,1.5)
\psline{->}(1.5,1.5)(2,0.5)
\pscurve{->}(2,0.5)(1.25,0.25)(0.5,0.5)
\pscurve{->}(1,4.5)(0.75,3.5)(1,2.5)
\psline{->}(1,2.5)(2.5,3)
\pscurve{->}(2.5,3)(2.75,3.75)(2.5,4.5)
\pscurve{->}(2.5,4.5)(1.75,4.7)(1,4.5)
\psline{->}(4,4.5)(4.5,3.5)
\psline{->}(4.5,3.5)(5,5)
\psline{->}(5,5)(4,4.5)
}
\end{pspicture}
\end{center}
\caption{A dot at site $x$ means that $\xi(x)=x$ while an arrow from $x$ to $y$
  means that $\xi(x)=y$. The left picture represents the ground state $\xi$  described
  in \eqref{xiy}.  On the right we see a permutation with positive probability according to
  $G_{\Lambda|\xi}$ defined in \eqref{specif}. The square represents the box $\Lambda$.}
 \label{f1}
\end{figure}
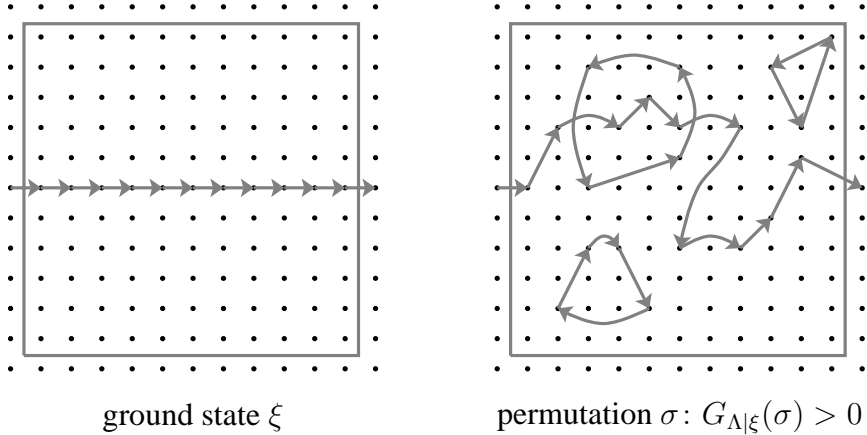

\emph{Domain of attraction of Gibbs measures. } Let $x=(x_1,\dots,x_d) \in \Z^d$ and $e_1=(1,0,\dots,0)$ denote the first vector in the
canonical basis.
In a forthcoming paper, Yuhjtman considers the Gaussian potential with ground state $\xi$ defined by
\begin{align}
\label{xiy}
\xi(x)=\left\{\begin{array}{ll}
x+e_1 & \hbox{if }x_2=\dots=x_d= 0,\\
x & \hbox{ otherwise},
\end{array}\right. 
\end{align}
(see Figure~\ref{f1}) and shows that  the thermodynamic limit of $G_{\Lambda|\xi}$, the specifications with boundary conditions given by $\xi$, is equal to the measure $\mu$ associated to identity boundary conditions. In particular, in dimensions higher than 1, the one-to-one correspondence between ground states and extremal Gibbs measures fails to hold.
 It would be interesting to find the domain of attraction of each Gibbs
state. That is, if $\mu$ is a Gibbs measure, one would like to characterize the set $\big\{\xi \,:\, \lim_{\Lambda\nearrow \Z^d}G_{\Lambda|\xi}=\mu\big\}$.

\begin{figure}[t]
\begin{center}
\psset{unit=0.8cm}
\psset{linewidth=1.2pt,arrowsize=7pt,arrowlength=0.7, arrowinset=0.2}
\begin{pspicture}(15,8)(0,0)  
\psset{linecolor=mygray}
\rput{0}(1,2){
\rput{0}(-0.28,-0.28){\psline(0,0)(5.5,0)(5.5,5.5)(0,5.5)(0,0)}
\multido{\nx=-0.5+0.5}{13}{
	\multido{\ny=-0.5+0.5}{13}{
		\pscircle[fillstyle=solid,linecolor=black,fillcolor=black](\nx,\ny){0.043}}}
\rput{0}(2.5,-1.3){ground state $\xi$}
\multido{\nz=2.0+-0.5}{6}{
	\multido{\nx=-0.5+0.5,\ny=0.0+0.5}{12}{
		\psline{->}(\nx,\nz)(\ny,\nz)}}	
}
\rput{0}(9,2)
{
\rput{0}(-0.28,-0.28){\psline(0,0)(5.5,0)(5.5,5.5)(0,5.5)(0,0)}
\multido{\nx=-0.5+0.5}{13}{
	\multido{\ny=-0.5+0.5}{13}{
		\pscircle[fillstyle=solid,fillcolor=black,linecolor=black](\nx,\ny){0.043}}}
\rput{0}(2.5,-1.3){permutation $\sigma\colon G_{\Lambda|\xi}(\sigma)>0$}

\multido{\nx=-0.5+0.5,\ny=0.0+0.5}{12}{
		\psline{->}(\nx,-0.5)(\ny,-0.5)}	
\psline{->}(-0.5,1.5)(0.0,1.5)
\psline{->}(0.0,1.5)(1.5,2.5)
\pscurve{->}(1.5,2.5)(2,2.7)(2.5,2.5)
\psline{->}(2.5,2.5)(3,3)
\pscurve{->}(3,3)(3.1,2.9)(3.5,2.3)(4,1)
\pscurve{->}(4,1)(4.5,1.75)(5,2.2)(5.5,2)
\psline{->}(-0.5,1)(0,1)
\pscurve{->}(0,1)(0.2,1.4)(0.4,2.7)(1,3)
\pscurve{->}(1,3)(1.5,3.2)(2,3)
\pscurve{->}(2,3)(2.5,3.75)(3,4)
\pscurve{->}(3,4)(3.75,4.2)(4.5,4)
\pscurve{->}(4.5,4)(4.7,3.25)(4.5,2.5)
\pscurve{->}(4.5,2.5)(4.3,1.5)(5,1)
\psline{->}(5,1)(5.5,1)
\psline{->}(-0.5,2)(0.0,2)
\pscurve{->}(0.0,2)(0.5,1.7)(1,1.2)(2,0.5)
\psline{->}(2,0.5)(3,1)
\psline{->}(3,1)(3.5,0.0)
\pscurve{->}(3.5,0.0)(4.25,0.25)(5,0.0)
\psline{->}(5,0.0)(5.5,0.0)
\psline{->}(-0.5,0.5)(0.0,0.5)
\psline{->}(0.0,0.5)(2.5,0.0)
\pscurve{->}(2.5,0.0)(3,0.25)(3.5,1)
\psline{->}(3.5,1)(4.5,0.5)
\pscurve{->}(4.5,0.5)(5,0.75)(5.5,0.5)
\psline{->}(-0.5,0)(0,0)
\pscurve{->}(0,0)(0.75,1)(1.5,1.5)
\pscurve{->}(1.5,1.5)(2.5,1.75)(3.5,1.5)
\psline{->}(3.5,1.5)(4,2)
\psline{->}(4,2)(5.5,1.5)
\psline{->}(4,4.5)(3.5,3.5)
\pscurve{->}(3.5,3.5)(4.7,3.4)(4.75,4.25)(5,5)
\psline{->}(5,5)(4,4.5)
\rput{0}(-0.5,3){
\psline{->}(0.5,0.5)(1,1.5)
\pscurve{->}(1,1.5)(1.25,1.7)(1.5,1.5)
\psline{->}(1.5,1.5)(2,1)
\psline{->}(2,1)(0.5,0.5)}
}
\end{pspicture}
\begin{pspicture}(15,8)(0,0)  
\psset{linecolor=mygray}
\rput{0}(1,2){
\rput{0}(-0.28,-0.28){\psline(0,0)(5.5,0)(5.5,5.5)(0,5.5)(0,0)}
\multido{\nx=-0.5+0.5}{13}{
	\multido{\ny=-0.5+0.5}{13}{
		\pscircle[fillstyle=solid,fillcolor=black,linecolor=black](\nx,\ny){0.043}}}
\rput{0}(2.5,-1.3){ground state $\xi'$}
\multido{\nz=0.0+1.0}{6}{
	\multido{\nx=-0.5+0.5,\ny=0.0+0.5}{12}{
		\psline{->}(\nx,\nz)(\ny,\nz)}}	
}
\rput{0}(9,2)
{
\rput{0}(-0.28,-0.28){\psline(0,0)(5.5,0)(5.5,5.5)(0,5.5)(0,0)}
\multido{\nx=-0.5+0.5}{13}{
	\multido{\ny=-0.5+0.5}{13}{
		\pscircle[fillstyle=solid,fillcolor=black,linecolor=black](\nx,\ny){0.043}}}
\rput{0}(2.5,-1.3){permutation $\sigma\colon G_{\Lambda|\xi'}(\sigma)>0$}
\psline{->}(-0.5,2)(0,2)
\psline{->}(0.0,2)(1.5,2.5)
\pscurve{->}(1.5,2.5)(2.25,2.7)(3,2.5)
\pscurve{->}(3,2.5)(3.5,2.25)(4,1)
\pscurve{->}(4,1)(4.75,1.2)(5.5,1)
\psline{->}(-0.5,1)(0.0,1)
\pscurve{->}(0.0,1)(0.2,1.4)(0.4,2.7)(1,3)
\pscurve{->}(1,3)(1.5,3.2)(2,3)
\psline{->}(2,3)(2.5,4)
\pscurve{->}(2.5,4)(3.5,4.2)(4.5,4)
\psline{->}(4.5,4)(5,2)
\psline{->}(5,2)(5.5,2)
\psline{->}(-0.5,0.0)(0,0)
\pscurve{->}(0,0)(0.75,1)(1.5,1.5)
\pscurve{->}(1.5,1.5)(2.5,1.75)(3.5,1.5)
\psline{->}(3.5,1.5)(4,2.5)
\pscurve{->}(4,2.5)(4.5,2.75)(5,4)
\psline{->}(5,4)(5.5,4)
\psline{->}(-0.5,3)(0,3)
\pscurve{->}(0,3)(0.5,2.75)(1,2)
\psline{->}(1,2)(0.5,1.5)
\psline{->}(0.5,1.5)(2,0.5)
\pscurve{->}(2,0.5)(2.75,0.25)(3.5,0.5)
\pscurve{->}(2,0.5)(2.75,0.25)(3.5,0.5)
\psline{->}(3.5,0.5)(5,0.0)
\psline{->}(5,0.0)(5.5,0.0)
\psline{->}(-0.5,4)(0,4)
\pscurve{->}(0,4)(0.75,4.25)(1.5,4)
\psline{->}(1.5,4)(3,5)
\pscurve{->}(3,5)(3.75,4.75)(4.5,5)
\pscurve{->}(4.5,5)(5,4.75)(5.5,5)
\psline{->}(-0.5,5)(0,5)
\psline{->}(0,5)(1,4.5)
\pscurve{->}(1,4.5)(2.5,4.7)(3.5,3.5)
\psline{->}(3.5,3.5)(5.5,3)
\psline{->}(1,0.5)(2.5,1)
\pscurve{->}(2.5,1)(2,0.2)(1,0.5)
}
\end{pspicture}
\end{center}
\caption{The ground states $\xi$ and $\xi'$ and permutations with positive probability for the specifications with $\xi$ and $\xi'$ boundary conditions respectively.}
\label{f2}
\end{figure}
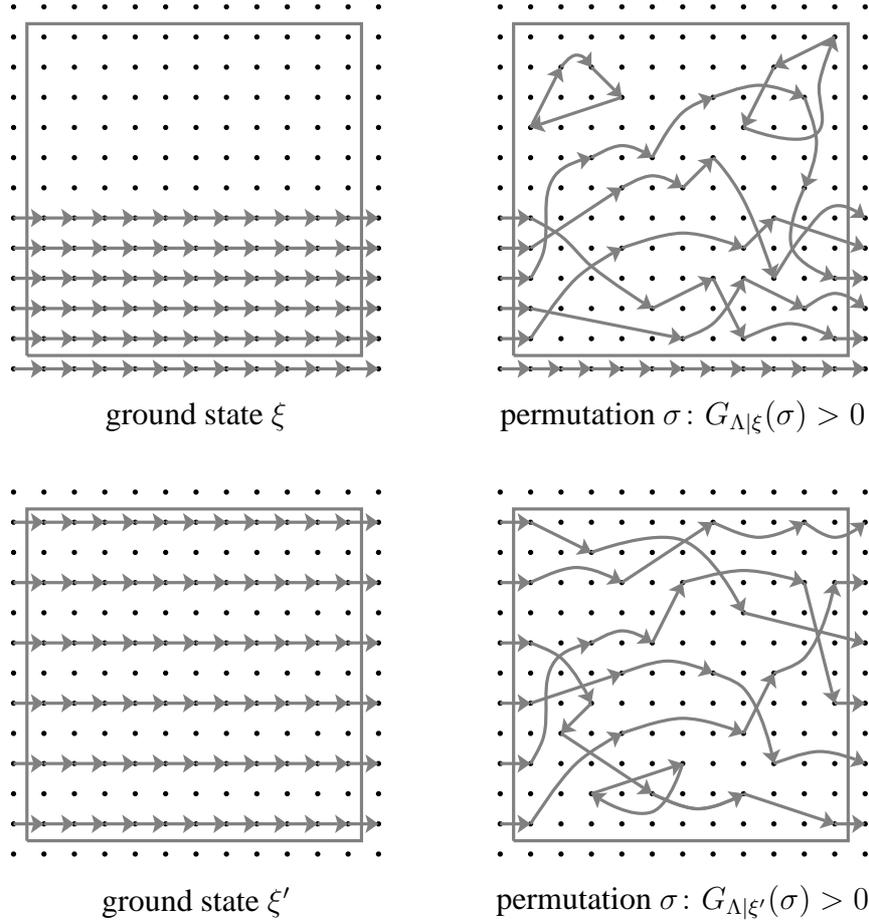

\emph{Further translation invariant Gibbs measures. } 
Set $d\ge 2$ and consider the ground states $\xi, \xi':\Z^d\to \Z^d$ given by
\begin{align}
\label{xiy2}
{\xi}(x)=\left\{\begin{array}{ll}
x &\hbox{if } x_d\geq 0,\\
x+e_1 &\hbox{if } x_d <0
\end{array}\right. ,
\qquad
{\xi'}(x)=\left\{\begin{array}{ll}
x &\hbox{if } x_d\hbox{ is even},\\
x+e_1& \hbox{if } x_d\hbox{ is odd}.
\end{array}\right. 
\end{align}

Our approach requires translation invariance of the
boundary conditions, which are satisfied neither by $\xi$ nor by $\xi'$ (see Figure~\ref{f2}).
The conjecture is that the thermodynamic
limit arising from any of these boundary conditions should lead to a Gibbs measure
with $\frac12$ - density of paths crossing the hyperplane $x_1=0$ from left to right.
In connection to these ground states, it would be interesting to describe the macroscopic shape determined by these
left-right crossing paths.

\emph{Permutations of point processes. } When the points are distributed according to a point process there are two possibilities. In the quenched case one studies the random permutation of a fixed point configuration. In this case we expect that our approach would be useful to show that for almost all point configuration there is a unique Gibbs measure when the temperature is high enough in relation to the point density $\rho$.
 The 1--dimensional quenched case is studied by Biskup and Richthammer 
\cite{BR}, who prove that there are no infinite cycles for any value of the temperature. S\"uto \cite{MR1241339, MR1945163} investigates the annealed case, 
where one jointly averages point positions and permutations. By integrating over the former, it is then possible to explicitly identify the temperature $\alpha_0$ below which 
infinite cycles appear, S\"uto points out that this is equivalent to Bose-Einstein condensation in the Bose gas. These results are generalized by Betz and Ueltschi in \cite{MR2461985}.

\paragraph{Organization of the article}  We introduce notation and describe rigorously the
results in Section \ref{s2}, we then sketch the techniques in Subsection
\ref{s2.1}. We construct the loss network approach of \cite{MR1849182} in Section
\ref{s3}, and prove the main results in Section \ref{s4}. Section \ref{s5} contains bounds for $\alpha^*(V)$. 

\section{Notation and Results}
\label{s2}

Denote by $S$  the set of permutations of $\Zd$, that is
\[
S:=\{\sigma:\Zd \to \Zd, \sigma \, \mbox{bijective} \}\,,
\]
equipped  with the product topology generated by the sets $\{\sigma\in S: \sigma(x)=y\}$, $x,y\in\Z^d$, and the associated Borel sigma-algebra $\mathcal B$. 
Given a permutation $\xi \in S$ and a finite set $\Lambda\subseteq\Zd$, let
\begin{equation}
\label{restrict}
S_{\Lambda|\xi}:=\{\sigma \in S : \sigma(x)=\xi(x), \,x \in  \Lambda^c\},
\end{equation}
be the set of permutations that match $\xi$ outside of $\Lambda$. Let $\rI$ be the identity permutation,  $\rI(x)=x$ for all $x\in\Zd$, and denote $S_\Lambda:=S_{\Lambda|\rI}$. Let
$\uU:\Z^d\to\R^+\cup \{+\infty\}$  be a strictly convex potential with $\uU(\vec0)=0$ and recall the definition \eqref{gh} of  
the Hamiltonian $H_{\Lambda}\colon
S \to \R$.

Fix $\alpha>0$. The Hamiltonian determines 
a family of probability measures called specifications, indexed by the set of
finite $\Lambda\subset\Z^d$ and permutations $\xi$, defined by
\begin{equation}
\label{specif}
G_{\Lambda|\xi} (\sigma) := \frac1{Z_{\Lambda|\xi}}\exp(-\alpha
H_{\Lambda}(\sigma)) \,,\quad\s\in S_{\Lambda|\xi},
\end{equation}
where $Z_{\Lambda|\xi}$ is the normalizing constant
$Z_{\Lambda|\xi}:=\sum_{\sigma \in S_{\Lambda|\xi}}\exp(-\alpha
H_{\Lambda}(\sigma))$. Denote $G_\Lambda:=G_{\Lambda|\rI}$.

A measure $\mu$ on $S$ is said to be Gibbs at temperature $\alpha$ for the family of 
specifications $(G_{\Lambda|\xi})$ if the conditional distribution of $\mu$ on
$\Lambda$ given $\xi$ outside $\Lambda$ coincides with the 
specification $G_{\Lambda|\xi}$. That is, for finite $\Lambda\subseteq\Zd$ and
$\xi\in S$,
\[
\mu\big( \cdot\,|\,\s(x)=\xi(x), x\in \Lambda^c \big) = G_{\Lambda|\xi}
\quad \mbox{for $\mu$-almost all $\xi\in S$.} 
\]
We denote the set of Gibbs measures at temperature $\alpha$ by $\cG^\alpha$, 
and let ${\cG}=\cup_{\alpha>0} {\cG}^{\alpha}$.

Take $n\ge 2$. A \emph{finite cycle} $\gamma$ of length $|\gamma|=n$ associated to the set of distinct
sites $x_1,\dots,x_n$ is a permutation $\gamma\in S$ such that $\gamma(x)=x$ for
all $x\notin\{x_1,\dots,x_n\}$, $x_{i+1} = \gamma(x_i)$ for all $i \in \{1,\dots,n\}$, with the convention $x_{n+1}=x_1$.
An \emph{infinite cycle} $\gamma$ associated to a
doubly infinite sequence of distinct sites $\dots,x_{-1},x_0,x_1,\dots$ is a permutation
such that $\gamma(x) = x$ if $x\neq x_i$ for any $i$ and $x_{i+1}
= \gamma(x_i)$ for all $i$. The support of a cycle $\gamma$ associated to $x_1,\dots,x_n$ is
$\{\gamma\}= \{x_1,\dots,x_n\}$. 
Denote the set of finite cycles by
\begin{align}
  \Gamma:=\{\gamma\in S:\gamma \hbox{ is a cycle with  $\{\gamma\}$ finite}\}\quad \hbox{ and }\quad  
\Gamma_\Lambda:=\{\gamma\in \Gamma:\{\gamma\}\subset \Lambda\},
\end{align}
the set of cycles with support contained in $\Lambda$.
We say that two permutations are
\emph{disjoint} if their supports are so.  

Denote $\sigma\sigma'$ the composition of the  permutations $\sigma,\sigma'$:
\[
(\sigma\sigma')(x) := \sigma(\sigma'(x)).
\]
Any
permutation $\sigma\neq \rI$ can be written as a finite or countable composition of disjoint cycles:
\begin{equation*}
  \label{cycle-reresentation}
  \sigma = \dots\gamma_2\gamma_1,\qquad
  \{\gamma_i\}\cap\{\gamma_j\}=\emptyset,\,\,\,\hbox{for all } i\neq j\,,
\end{equation*}
note that the order of the cycles in this composition does not matter. The
identity has no cycle decomposition.  A permutation $\sigma$ is called
\emph{finite-cycle}  if all cycles in its decomposition are finite.
In this case we identify  $\sigma\ne\rI$
with the ``gas of cycles'' $\{\gamma_1,\dots,\gamma_k\}$, $k=k(\sigma) \in \N\cup \{+\infty\}$, while
the identity $\rI$ is identified with the empty set. We denote $\gamma\in\sigma$
when $\gamma$ is one of the cycles in the decomposition of $\sigma$. 

For a finite cycle $\gamma\in\Gamma$, define the \emph{weight} of $\gamma$ by
\begin{align}
\label{weight}
  w(\gamma) :=\; \exp\Bigl\{-\alpha\sum_{x\in\{\gamma\}} \uU\big(\gamma(x)-x\big)\Bigr\}. 
\end{align}
Since $\gamma$ is a cycle and $\uU$ is strictly convex, the sum in \eqref{weight} is strictly positive, which in turn implies  $w(\gamma)\in (0,1)$ for all $\alpha>0$. Define
\begin{align}
\label{beta}
  \beta(\uU,\alpha):= \sum_{\gamma\in\Gamma,\{\gamma\}\ni\vec 0} |\gamma|\,w(\gamma).
\end{align}
If $\beta(\uU,\alpha)$ is finite for some $\alpha$, then  $\beta(\uU,\alpha)$ is decreasing in $\alpha$ and  $\beta(\uU,\alpha)<1$ for all $\alpha>\alpha^*$ defined by
\begin{align}
\label{alpha*}
  \alpha^*(\uU) := \inf\{\alpha: \beta(\uU,\alpha)<1\}.
\end{align}
If $\beta(V, \alpha)=\infty$ for all $\alpha$ we set $\alpha^*=\infty$.

In our first theorem we give sufficient conditions on $\alpha$ for the existence of a Gibbs measure as limit of specifications with identity boundary conditions. The proof follows the lines proposed in \cite{MR1849182} to construct the infinite volume limit of the contour representation for the Ising model at low temperature. We include the proof for the convenience of the reader. 
\begin{theorem}
  \label{t1}
Identity boundary conditions.

\noindent Fix a strictly convex potential $\uU:\Zd\to\R^+\cup \{+\infty\}$ satisfying $\uU(\vec0)=0$. 
Assume $\alpha^*(\uU)<\infty$. Then, for each $\alpha>\alpha^*(\uU)$  there exists a random process $(\zeta_{\Lambda},\,\Lambda\subseteq \Z^d)$ on $(S_{\Lambda},\,\Lambda\subseteq \Z^d)$ such that

\noindent (i) for finite $\Lambda$, $\zeta_{\Lambda}$ is distributed according to $G_{\Lambda}$, the specification with identity boundary conditions, 

\noindent (ii) $\lim_{\Lambda\nearrow\Z^d}\zeta_{\Lambda}(x) = \zeta_{\Z^d}(x)$ almost
surely, for each $x \in \Z^d$. Call $\mu$ the distribution of $\zeta_{\Z^d}$. Then, $\lim_{\Lambda\nearrow\Z^d}G_{\Lambda}=\mu$ weakly. 

\noindent (iii) $\mu$ is an ergodic Gibbs measure at temperature $\alpha$ with mean jump~$\vec 0$.

\noindent (iv) $\mu$ is the unique Gibbs measure for the specifications $G_{\Lambda}$, supported on the set of finite-cycle permutations of $\Z^d$.
\end{theorem}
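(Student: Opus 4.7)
My plan is to follow the loss-network approach of Fern\'andez--Ferrari--Garc\'ia announced in the introduction. The first step is to identify each finite-cycle permutation $\sigma\in S_\Lambda$ with the hard-core configuration $\eta_\sigma\in\{0,1\}^{\Gamma_\Lambda}$ recording its cycle decomposition, subject to pairwise disjointness of the supports $\{\gamma\}$. Under this identification, $G_\Lambda$ pushes forward to the measure on $\{0,1\}^{\Gamma_\Lambda}$ proportional to $\prod_\gamma w(\gamma)^{\eta(\gamma)}$, with the weight $w(\gamma)$ defined in \eqref{weight}, and a direct check shows that this is the reversible invariant measure of the loss network on $\Gamma_\Lambda$ in which each cycle $\gamma$ appears at rate $w(\gamma)$ (provided it is disjoint from every cycle currently present) and disappears at rate $1$.

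\textbf{Graphical coupling.} Realize all loss networks on a common probability space using independent Poisson marks $(N^b_\gamma,N^d_\gamma)_{\gamma\in\Gamma}$ on $\R$ of rates $w(\gamma)$ and $1$ respectively. For finite $\Lambda$, let $\zeta_\Lambda$ be the time-$0$ snapshot of the loss network restricted to $\Gamma_\Lambda$, read off as a permutation via its cycle decomposition; this is stationary and distributed as $G_\Lambda$. For the infinite-volume configuration $\zeta_{\Z^d}$, apply the standard backward-sketch algorithm: for each cycle $\gamma$, whether $\eta_{\zeta_{\Z^d}}(\gamma)=1$ is decided recursively from the Poisson marks attached to cycles overlapping $\gamma$ that are born in the lifetime interval of $\gamma$, and then on their own clans, and so on. If this clan of ancestors is almost surely finite, $\zeta_{\Z^d}$ is a well-defined measurable function of the Poisson data and, by stationarity, its law $\mu$ satisfies the DLR equations for $(G_{\Lambda|\xi})$.

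\textbf{Subcriticality.} Finiteness of every clan follows by dominating it by a length-weighted multitype branching process in which a cycle $\gamma$ produces during its unit-mean lifetime a Poisson number of overlapping offspring $\gamma'$ with mean $w(\gamma')\,\one\{\{\gamma'\}\cap\{\gamma\}\ne\emptyset\}$. By translation invariance of the weights and the definition \eqref{beta},
\begin{equation*}
\sum_{\gamma'\,:\,\{\gamma'\}\cap\{\gamma\}\ne\emptyset} |\gamma'|\,w(\gamma')\;\le\;|\gamma|\,\beta(V,\alpha),
\end{equation*}
so the expected total length of the offspring of a cycle of length $|\gamma|$ is at most $|\gamma|\,\beta(V,\alpha)$. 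Iterating, the expected total length of the $n$-th generation is bounded by $|\gamma|\,\beta(V,\alpha)^n$. For $\alpha>\alpha^*(V)$ we have $\beta(V,\alpha)<1$, the clans are a.s.\ finite, and the coupling $(\zeta_\Lambda)$ is well defined; this yields (i), and (ii) follows because the clan that determines the cycle through any fixed $x$ is eventually contained in $\Gamma_\Lambda$, whence $\zeta_\Lambda(x)=\zeta_{\Z^d}(x)$ for $\Lambda$ large.

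\textbf{Ergodicity and uniqueness.} For (iii), translation invariance of $\mu$ comes from the translation invariance of the Poisson marks, and ergodicity from the fact that the value of $\zeta_{\Z^d}$ on any finite window depends only on finitely many Poisson marks in a.s.\ finite clans, making the tail $\sigma$-algebra trivial. The mean-jump statement follows from the ergodic theorem: since $\sum_{x\in\{\gamma\}}(\gamma(x)-x)=\vec 0$ for every finite cycle, the spatial average $|\Lambda|^{-1}\sum_{x\in\Lambda}(\sigma(x)-x)$ converges a.s.\ to $\vec 0$ under $\mu$, identifying the mean jump as $\vec 0$ by translation invariance. For (iv), any Gibbs measure supported on finite-cycle permutations pushes forward to a hard-core measure on $\{0,1\}^\Gamma$ with activities $w(\gamma)$; the graphical construction exhibits, on an enlarged probability space, an explicit measurable function of the Poisson data with which any such stationary configuration must almost surely agree, giving uniqueness. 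The step I expect to be the main obstacle is precisely (iv): transferring the finiteness of the backward clans from the constructed coupling to an arbitrary finite-cycle Gibbs measure requires a careful extension/coupling argument (standard in the Ising--contour context but slightly subtler here because cycles, unlike contours, are not geometrically local objects of bounded diameter).
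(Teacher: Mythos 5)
Your proposal follows essentially the same route as the paper: the identification of $G_\Lambda$ with a hard-core gas of cycles reversible for a loss network, the Harris graphical construction with backward clans of ancestors, domination by a multitype branching process whose subcriticality is exactly the condition $\beta(V,\alpha)<1$ (with the same key bound $\sum_{\gamma'\not\sim\gamma}|\gamma'|w(\gamma')\le|\gamma|\beta$), localization of the clan to prove the almost sure thermodynamic limit, ergodicity from the Poisson process, and uniqueness by coupling an arbitrary finite-cycle invariant configuration started at time $-t$ to the stationary process. The step you flag as delicate in (iv) is handled in the paper by exactly the coupling you describe (running the dynamics from an arbitrary $\nu$-distributed configuration and using finiteness of the clans to show agreement at time $0$ as $t\to\infty$), so your plan is correct and matches the paper's proof.
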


We next consider more general boundary conditions. 

We will say that the permutation $\sigma'$ is a \emph{local perturbation} of $\sigma$ if the set
$\{x\in\Z\,:\, \sigma'(x)\neq\sigma(x)\}$ is finite; in this case, the energy
difference between $\sigma'$ and $\sigma$ is defined by
  \begin{equation*}\label{diff}
    H(\s')-H(\s)  :=  \sum_{x:\sigma(x)\neq \sigma'(x)} \big(\uU(\sigma'(x)-x)-\uU(\sigma(x)-x)\big).
  \end{equation*}
A \emph{ground state} is a permutation $\xi\in S$ such that for any
local perturbation $\xi'$ of $\xi$, $H(\xi')-H(\xi) \;\geq\;0$. For $v\in\Zd$, the shift permutation $\tau_v\in S$ defined in \eqref{xv1} is a ground state: given a finite cycle $\gamma$, the permutation $\tau_v\gamma$ is a local perturbation of $\tau_v$ with energy difference
\begin{align}
  \label{tauvp}
H(\tau_v\gamma) - H(\tau_v) = \sum_{x\in\{\gamma\}} \big[\uU(\gamma(x)+v-x)-\uU(v)\big] > 0,
\end{align}
by the strict convexity of $\uU$. 

\begin{figure}[t]
\begin{center}
\psset{unit=1.2cm}
\begin{pspicture}(8,9)(0.5,1)  
\psset{linewidth=1.2pt,arrowsize=7pt,arrowlength=0.7, arrowinset=0.2}
\psset{linecolor=mygray}
\rput{0}(1,2){
\multido{\nx=0.0+0.5}{6}{
	\multido{\ny=-0.0+0.5}{6}{
		\pscircle[fillstyle=solid,fillcolor=black,linecolor=black](\nx,\ny){0.043}}}
\rput{0}(1.25,-0.5){$I$}
}
\rput{0}(5,2)
{
\multido{\nx=0.0+0.5}{6}{
	\multido{\ny=-0.0+0.5}{6}{
		\pscircle[fillstyle=solid,fillcolor=black,linecolor=black](\nx,\ny){0.043}}}
\rput{0}(1.25,-0.5){$\gamma$}
\psline{->}(1,0.5)(0.5,1.5)
\psline{->}(0.5,1.5)(1.5,1)
\psline{->}(1.5,1)(1,0.5)
}
\rput{0}(1,6){
\multido{\nx=0.0+0.5}{6}{
	\multido{\ny=-0.0+0.5}{6}{
		\pscircle[fillstyle=solid,fillcolor=black,linecolor=black](\nx,\ny){0.043}}}
\rput{0}(1.25,-0.5){$\tau_v$, $v=(1,1)$}
\multido{\nx=0.0+0.5}{5}{
	\multido{\ny=0.0+0.5}{5}{\rput{0}(\nx,\ny){
		\psline{->}(0,0)(0.5,0.5)}}}
}
\rput{0}(5,6)
{
\multido{\nx=0.0+0.5}{6}{
	\multido{\ny=-0.0+0.5}{6}{
		\pscircle[fillstyle=solid,fillcolor=black,linecolor=black](\nx,\ny){0.05}}}
\rput{0}(1.25,-0.5){$\tau_v\gamma$}
\multido{\nx=0.0+0.5}{5}{\rput{0}(\nx,0.0){\psline{->}(0,0)(0.5,0.5)}}
\multido{\nx=0.0+0.5}{2}{\rput{0}(\nx,0.5){\psline{->}(0,0)(0.5,0.5)}}
\multido{\nx=1.5+0.5}{2}{\rput{0}(\nx,0.5){\psline{->}(0,0)(0.5,0.5)}}
\multido{\nx=0.0+0.5}{3}{\rput{0}(\nx,1.0){\psline{->}(0,0)(0.5,0.5)}}
\rput{0}(2.0,1){\psline{->}(0,0)(0.5,0.5)}
\rput{0}(0.0,1.5){\psline{->}(0,0)(0.5,0.5)}
\multido{\nx=1.0+0.5}{3}{\rput{0}(\nx,1.5){\psline{->}(0,0)(0.5,0.5)}}
\multido{\nx=0.0+0.5}{5}{\rput{0}(\nx,2){\psline{->}(0,0)(0.5,0.5)}}
\pscurve{->}(1,0.5)(0.7,1.25)(1,2)
\pscurve{->}(0.5,1.5)(1.25,1.75)(2,1.5)
}
\end{pspicture}
\end{center}
\caption{Local perturbation $\tau_v \gamma$ of $\tau_v$, $v=(1,1)$, introduced by the cycle $\gamma$.}
\label{f30}
\end{figure}
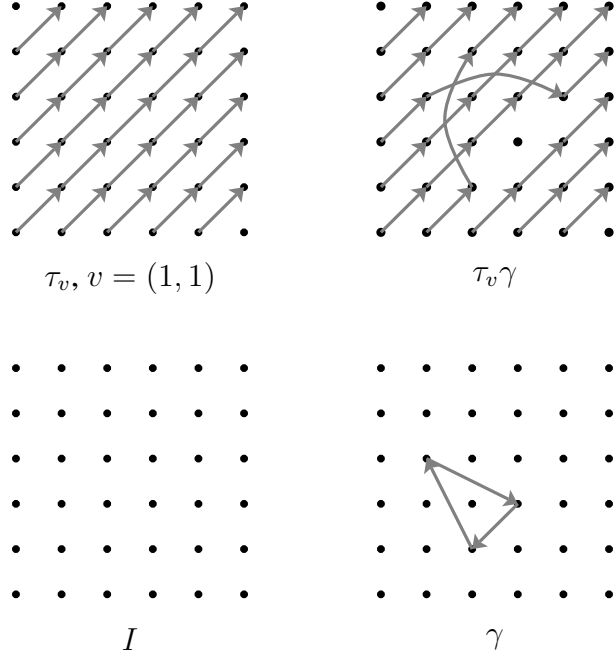

The next lemma says that a local perturbation of $\tau_v$ is a composition of a finite number of finite cycles with $\tau_v$. We leave the proof to the reader. See Figure~\ref{f30}.

\begin{lemma}
\label{tau'}
  If $\tau'_v$ is a local perturbation of $\tau_v$, then there exist disjoint finite cycles $\gamma_1,\dots,\gamma_n$ in $\Gamma$ such that $\tau'_v=\tau_v\gamma_1\dots\gamma_n$. If $\uU$ is strictly convex, then $H(\tau'_v) - H(\tau_v)>0$.  
\end{lemma}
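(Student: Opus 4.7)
The plan is to rewrite $\tau'_v = \tau_v\pi$ where $\pi := \tau_{-v}\tau'_v$, and exhibit $\pi$ as a finitely supported permutation of $\Z^d$, from which the cycle decomposition is immediate. Let $D := \{x\in\Z^d : \tau'_v(x)\neq \tau_v(x)\}$, a finite set by the local perturbation hypothesis. For $x\notin D$, one has $\pi(x) = \tau'_v(x) - v = \tau_v(x) - v = x$, so $\pi$ is the identity off $D$. The step requiring a touch of care is to verify that $\pi(D)=D$, so that the cycle decomposition of $\pi$ genuinely sits inside the finite set $D$: since $\tau'_v$ is a bijection and agrees with $\tau_v$ on $D^c$, it maps $D^c$ onto $\tau_v(D^c) = D^c+v$, hence by bijectivity maps $D$ onto $D+v$; translating back by $-v$ shows $\pi(D)=D$. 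Therefore $\pi$ is a permutation of $\Z^d$ with finite support contained in $D$, which admits a (unique, finite) decomposition $\pi = \gamma_1\cdots\gamma_n$ into pairwise disjoint cycles $\gamma_i\in\Gamma$, giving $\tau'_v = \tau_v\gamma_1\cdots\gamma_n$ as required.

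For the energy inequality, disjointness of the supports together with the identity $\tau'_v(x) = \gamma_i(x)+v$ for $x\in\{\gamma_i\}$ and $\tau'_v(x)=\tau_v(x)$ elsewhere yields
\begin{align*}
H(\tau'_v)-H(\tau_v) \;=\; \sum_{i=1}^n\sum_{x\in\{\gamma_i\}}\bigl[V(\gamma_i(x)-x+v)-V(v)\bigr].
\end{align*}
For each cycle $\gamma_i$, the increments $\gamma_i(x)-x$ sum to $\vec 0$ as $x$ ranges over $\{\gamma_i\}$, so the mean of $\gamma_i(x)-x+v$ over $\{\gamma_i\}$ equals $v$. Strict convexity of $V$ and Jensen's inequality then give
\begin{align*}
\frac{1}{|\gamma_i|}\sum_{x\in\{\gamma_i\}} V(\gamma_i(x)-x+v) \;\ge\; V(v),
\end{align*}
with strict inequality unless $\gamma_i(x)-x$ is constant on $\{\gamma_i\}$; that constant would then have to be $\vec 0$, forcing $\gamma_i=\rI$ and contradicting $|\gamma_i|\ge 2$. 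Summing the strict inequalities over $i=1,\dots,n$ yields $H(\tau'_v)-H(\tau_v)>0$.

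The only genuinely non-mechanical step is the $\pi$-invariance of $D$; everything else reduces to the cycle decomposition of a finitely supported permutation and a clean application of Jensen's inequality combined with the cycle balance $\sum_{x\in\{\gamma\}}(\gamma(x)-x)=\vec 0$. I do not expect any further obstacle.
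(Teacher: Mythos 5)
The paper offers no proof of this lemma (it is explicitly ``left to the reader''), so there is nothing to compare against; your argument is correct and supplies exactly what is missing. The first half is the standard cycle decomposition of the finitely supported permutation $\pi=\tau_{-v}\tau'_v$ (your careful check that $\pi(D)=D$ is in fact automatic, since the non-fixed-point set of any permutation is invariant under it), and the second half is precisely the Jensen/zero-mean-increment computation that the paper itself invokes for a single cycle in \eqref{tauvp}, extended to finitely many disjoint cycles.
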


In the following theorem we establish conditions on $\alpha$ that allow to extend the results of Theorem \ref{t1} to $\tau_v$-boundary conditions. 
For a finite cycle $\gamma\in\Gamma$, denote the $v$-\emph{weight} of $\gamma$ by
\begin{align}
\label{weightv}
  w_v(\gamma) :=\; \exp\Bigl\{-\alpha\sum_{x\in\{\gamma\}} \big(\uU(\gamma(x)+v-x\big)-\uU(v)\big)\Bigr\}. 
\end{align}
Given a measure $\mu$ and a permutation $\xi$ recall the definition of the shifted measure $\mu\xi$ from \eqref{comp}.
In order to obtain the result, we first consider  the composition of  a configuration with $\tau_v$ boundary conditions with the permutation $\tau_{-v}$ to produce a finite cycle permutation with cycles weighted by $w_v$. We then apply Theorem \ref{t1} to this random permutation, take limits in $\Lambda\nearrow \Z^d$, and as a last step compose the resulting measure with the permutation $\tau_v$ to recover the initial boundary conditions.

Let 
\begin{align}
\label{betav}
  \alpha^*_v(\uU) := \inf\{\alpha: \beta_v(\uU,\alpha)<1\},
\quad\hbox{ where }\quad
\beta_v(\uU,\alpha):= \sum_{\gamma\in\Gamma,\{\gamma\}\ni\vec 0} |\gamma|\,w_v(\gamma).
\end{align}

\begin{theorem}
  \label{t2}
  $\tau_v$ boundary conditions.
  
\noindent   Fix  a vector
  $v\in\Zd$ and a strictly convex potential $\uU:\Zd\to\R^+\cup \{+\infty\}$ such that $\uU(\vec0)=0$.
If $\alpha^*_v(\uU)$ is finite, then for any $\alpha>\alpha^*_v(\uU)$ there exists a random process $(\zeta_{\Lambda,v},\,\Lambda\subseteq \Z^d)$ on $(S_{\Lambda},\,\Lambda\subseteq \Z^d)$ such that

\noindent (i) For finite $\Lambda$, $\tau_v\zeta_{\Lambda,v}$ is distributed according to $G_{\Lambda|\tau_v}$, the specification with $\tau_v$ boundary conditions.

\noindent (ii) $\lim_{\Lambda\nearrow\Z^d}\tau_v\zeta_{\Lambda,v}(x) = \tau_v\zeta_{\Z^d,v}(x)$ almost surely, for all $x\in \Z^d$. Calling $\mu_v$ the law of $\tau_v\zeta_{\Z^d,v}$, we get $\lim_{\Lambda\nearrow\Z^d}G_{\Lambda|\tau_v}=\mu_v$ weakly. 

\noindent (iii) $\mu_v$ is an ergodic Gibbs measure at temperature $\alpha$ with mean jump~$v$.

\noindent (iv) $\mu_{v}\tau_{-v}$ is the only measure with cycle weights $w_v$
supported on the set of finite-cycle permutations of $\Z^d$. 

\end{theorem}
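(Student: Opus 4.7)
The strategy is to reduce Theorem~\ref{t2} to Theorem~\ref{t1} through the bijection $\sigma\mapsto\tau_{-v}\sigma$ between $S_{\Lambda|\tau_v}$ and $S_{\Lambda}$. The first step is an algebraic identity at the level of specifications. Given $\sigma\in S_{\Lambda|\tau_v}$, an immediate extension of Lemma~\ref{tau'} (relaxing local perturbations to perturbations contained in $\Lambda$) writes $\sigma=\tau_v\gamma_1\cdots\gamma_n$ with disjoint cycles $\gamma_i\in\Gamma_\Lambda$. Separating sites inside and outside $\bigcup_i\{\gamma_i\}$ gives
\begin{equation*}
H_\Lambda(\sigma)\;=\;|\Lambda|\,V(v)\;+\;\sum_{i=1}^{n}\sum_{x\in\{\gamma_i\}}\bigl(V(\gamma_i(x)+v-x)-V(v)\bigr),
\end{equation*}
so that $G_{\Lambda|\tau_v}(\sigma)$, viewed as a function of $\eta:=\tau_{-v}\sigma\in S_\Lambda$, is proportional to $\prod_i w_v(\gamma_i)$. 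Thus the pushforward $\tilde G_\Lambda$ of $G_{\Lambda|\tau_v}$ under $\tau_{-v}$ is exactly the identity-boundary specification with cycle weight $w_v$ in place of $w$.

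Next I would apply Theorem~\ref{t1} to the family $\tilde G_\Lambda$. Inspection of that theorem and of its loss-network proof in Sections~\ref{s3}--\ref{s4} shows that $V$ enters only through the cycle weights $w(\gamma)\in(0,1)$ and the subcriticality condition $\beta(V,\alpha)<1$. Both properties hold verbatim with $w_v$, $\beta_v$ in place of $w$, $\beta$: the first by strict convexity of $V$, which keeps the exponent in \eqref{weightv} strictly positive exactly as in \eqref{weight}; the second by the hypothesis $\alpha>\alpha^*_v(V)$. This yields a coupled family $(\zeta_{\Lambda,v},\,\Lambda\subseteq\Z^d)$ of permutations in $S_\Lambda$, with $\zeta_{\Lambda,v}\sim\tilde G_\Lambda$, $\zeta_{\Lambda,v}(x)\to\zeta_{\Z^d,v}(x)$ almost surely, and $\mu:=\text{Law}(\zeta_{\Z^d,v})$ an ergodic, finite-cycle, mean-zero Gibbs measure for the weight-$w_v$ specifications, unique with these properties. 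I would then set $\mu_v:=\mu\tau_v$, so that $\tau_v\zeta_{\Lambda,v}\sim G_{\Lambda|\tau_v}$ by Step~1, giving (i), and $\tau_v\zeta_{\Lambda,v}(x)=\zeta_{\Lambda,v}(x)+v$ transfers the almost sure convergence to (ii).

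For (iii), translation invariance and ergodicity pass from $\mu$ to $\mu_v$ because the spatial shift $T_y\sigma(x):=\sigma(x-y)+y$ commutes with left composition by $\tau_v$; the mean jump becomes $\E[\tau_v\zeta(x)-x]=\E[\zeta(x)-x]+v=v$. The DLR property for $\mu_v$ is obtained by pulling back: a $\mu_v$-typical boundary condition $\xi$ has the form $\tau_v\sigma'$ with $\sigma'$ a finite-cycle permutation, the event $\{\sigma=\xi\text{ on }\Lambda^c\}$ corresponds to $\{\tau_{-v}\sigma=\sigma'\text{ on }\Lambda^c\}$, and the DLR equations for $\mu$ with weight $w_v$ translate through Step~1 into the specification $G_{\Lambda|\xi}$ for the original problem. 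Finally, (iv) is the uniqueness clause of Theorem~\ref{t1}(iv) applied to the weight-$w_v$ identity-boundary problem, yielding uniqueness of $\mu_v\tau_{-v}=\mu$ among finite-cycle measures with cycle weights $w_v$. The main effort, though not conceptually difficult, lies in verifying that the loss-network construction of Section~\ref{s3} is insensitive to the substitution $w\mapsto w_v$, so that only the two abstract conditions of Theorem~\ref{t1} (weights in $(0,1)$ and subcriticality) are actually used and everything else carries over mechanically.
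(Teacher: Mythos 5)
Your proposal is correct and follows essentially the same route as the paper: the paper defines the shifted potential $\uU_v(y)=\uU(y+v)-\uU(v)$, applies Theorem~\ref{t1} to it (equivalently, to the weights $w_v$, with $\alpha^*(\uU_v)=\alpha^*_v(\uU)$), and recovers $G_{\Lambda|\tau_v}$ as the pushforward of the resulting identity-boundary specification under left composition with $\tau_v$, exactly as in your Step~1. Your explicit remark that only the two properties $w_v(\gamma)\in(0,1)$ and $\beta_v(\uU,\alpha)<1$ are used is a fair (and slightly more careful) account of why the reduction is legitimate even though $\uU_v$ need not be nonnegative pointwise.
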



We finally consider separately the Gaussian potential. Although it is in principle covered by the previous results, it is worth pointing out that in this case the associated $v$-weights do not actually depend on $v$, $w_v(\gamma)=w(\gamma)$ for all $\gamma \in \Gamma$, with the consequence that the shift boundary condition measures are just the composition of the identity boundary conditions Gibbs measure $\mu$ with $\tau_v$, $\mu_v=\mu\tau_v$, and the value of $\alpha^*$ is the same for all $v\in \Z^d$, $\alpha_v^*=\alpha^*_{\vec{0}}$. We also compute an explicit bound on $\alpha^*$.


\begin{theorem}\label{t3}
  The Gaussian case.
  
\noindent Let $\uU(x)=\|x\|^2$, then $\alpha^*(V)\le \big(1.44504^{1/d}-1\big)^{-2}$. 
  
\noindent Fix $\alpha>\alpha^*(\uU)$, let $(\zeta_{\Lambda},\,\Lambda\subseteq \Z^d)$ be the process constructed in Theorem \ref{t1}, and let $\mu$ be the distribution of $\zeta_{\Z^d}$. 
 Then, for each $v\in\Z^d$,

\noindent (i) for finite $\Lambda$,  $G_\Lambda\tau_v= G_{\Lambda|\tau_v}$, the specification with $\tau_v$ boundary conditions. In particular, $\tau_v\zeta_\Lambda$ has distribution   $G_{\Lambda|\tau_v}$.

\noindent (ii) $\lim_{\Lambda\nearrow\Z^d}\tau_v\zeta_\Lambda(x) = \tau_v\zeta_{\Z^d}(x)$ almost surely, for all $x\in \Z^d$. As a consequence   $\lim_{\Lambda\nearrow\Z^d}G_{\Lambda|\tau_v}=\mu\tau_v$ weakly.

\noindent (iii) $\mu\tau_v$ is an ergodic Gibbs measure at temperature $\alpha$ and mean jump~$v$. 
\end{theorem}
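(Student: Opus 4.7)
The proof reduces most of Theorem \ref{t3} to Theorem \ref{t1}, exploiting two features special to the Gaussian potential: the algebraic identity $\|u+v\|^2=\|u\|^2+2\langle u,v\rangle+\|v\|^2$, and the fact that any finite cycle $\gamma$ satisfies $\sum_{x\in\{\gamma\}}(\gamma(x)-x)=\vec 0$ (since $\gamma$ permutes its own support). Together these give $\sum_{x\in\{\gamma\}}[V(\gamma(x)+v-x)-V(v)]=\sum_{x\in\{\gamma\}}V(\gamma(x)-x)$, so $w_v(\gamma)=w(\gamma)$ for every $v\in\Z^d$ and every $\gamma\in\Gamma$; in particular $\alpha^*_v(V)=\alpha^*(V)$.

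For part (i) I would observe that $\sigma\mapsto\tau_v\sigma$ is a bijection of $S_\Lambda$ onto $S_{\Lambda|\tau_v}$. Because $\sigma\in S_\Lambda$ permutes $\Lambda$ as a set, $\sum_{x\in\Lambda}(\sigma(x)-x)=\vec 0$, and expanding the square gives $H_\Lambda(\tau_v\sigma)-H_\Lambda(\sigma)=|\Lambda|\|v\|^2$, independent of $\sigma$. This constant cancels in the Gibbs normalization and shows $G_\Lambda\tau_v=G_{\Lambda|\tau_v}$. The same computation with a general boundary $\xi$ in place of $\rI$, using the set equality $\sigma(\Lambda)=\xi(\Lambda)$ to show that the cross term depends only on $(\xi,\Lambda,v)$ and not on $\sigma\in S_{\Lambda|\xi}$, yields the intertwining identity
\begin{equation*}
G_{\Lambda|\xi}\,\tau_v\;=\;G_{\Lambda|\tau_v\xi},
\end{equation*}
which I will reuse in (iii). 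Part (ii) is then immediate: $(\tau_v\zeta_\Lambda)(x)=\zeta_\Lambda(x)+v$ converges a.s. to $(\tau_v\zeta_{\Z^d})(x)$ by Theorem \ref{t1}(ii), and the weak convergence $G_{\Lambda|\tau_v}\Rightarrow\mu\tau_v$ follows from pointwise a.s. convergence in the product topology on $S$.

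The Gibbs property in (iii) is a direct consequence of the intertwining just established: for any finite $\Lambda$ and any $\xi'$, conditioning $\sigma'=\tau_v\sigma\sim\mu\tau_v$ on $\sigma'=\xi'$ off $\Lambda$ is equivalent to conditioning $\sigma\sim\mu$ on $\sigma=\tau_{-v}\xi'$ off $\Lambda$, which by the DLR equation for $\mu$ gives conditional law $G_{\Lambda|\tau_{-v}\xi'}$; shifting by $\tau_v$ then yields $G_{\Lambda|\tau_{-v}\xi'}\tau_v=G_{\Lambda|\xi'}$, which is the DLR equation for $\mu\tau_v$. Translation-ergodicity transfers from $\mu$ because lattice translations commute with left multiplication by $\tau_v$, and the mean jump of $\mu\tau_v$ equals the mean jump $\vec 0$ of $\mu$ shifted by $v$, i.e. $v$.

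It remains to prove the bound $\alpha^*(V)\le(1.44504^{1/d}-1)^{-2}$. I would parameterize a cycle of length $n$ through $\vec 0$ by its jump sequence $(d_1,\ldots,d_n)\in(\Z^d\setminus\{\vec 0\})^n$ with $\sum_i d_i=\vec 0$ and distinct partial sums, and then drop both constraints. The weight factorizes and
\begin{equation*}
\beta(V,\alpha)\;\le\;\sum_{n\ge 2}n\,x^n\;=\;\frac{x^2(2-x)}{(1-x)^2},\qquad x:=f(\alpha)^d-1,\quad f(\alpha):=\sum_{k\in\Z}e^{-\alpha k^2}.
\end{equation*}
Setting the right-hand side equal to $1$ gives $x^3-x^2-2x+1=0$, whose unique root in $(0,1)$ is $x_c$ with $1+x_c\approx 1.44504$, so $\beta<1$ whenever $f(\alpha)<1.44504^{1/d}$. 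Combined with a sharp tail estimate of the form $f(\alpha)-1\le 1/\sqrt\alpha$ in the relevant range of $\alpha$ (to be carried out in Section \ref{s5}), this translates into the stated bound. The main obstacle is sharpening the tail estimate on the theta-type sum $f(\alpha)-1$ enough to produce precisely the prefactor $1$ (rather than the $\sqrt\pi$ that a crude integral comparison would yield); everything else is a clean consequence of Gaussian algebra and Theorem \ref{t1}.
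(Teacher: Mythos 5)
Your argument tracks the paper's proof essentially step for step: the identity $w_v=w$ from $\sum_{x\in\{\gamma\}}(\gamma(x)-x)=\vec 0$, part (i) by cancellation of the cross term (you sum over $\Lambda$ directly where the paper sums over the cycles of $\tau_{-v}\xi$ via Lemma \ref{tau'} --- the same computation), part (ii) as an immediate consequence of Theorem \ref{t1}(ii), and the numerical bound by dropping the distinctness constraint on the cycle's sites and summing $\sum_{n\ge 2}n\rho^n$, which is exactly Lemma \ref{rho} and Corollary \ref{8888} (your $x=f(\alpha)^d-1$ is the paper's $\rho$, and your cubic $x^3-x^2-2x+1=0$ is equivalent to \eqref{rho-c}). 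Your treatment of (iii) is more explicit than the paper's: the intertwining $G_{\Lambda|\xi}\tau_v=G_{\Lambda|\tau_v\xi}$, valid because $\sigma(\Lambda)=\xi(\Lambda)$ for every $\sigma\in S_{\Lambda|\xi}$ so the cross term is constant on $S_{\Lambda|\xi}$, gives a direct DLR verification for $\mu\tau_v$, whereas the paper only transfers ergodicity and relies on ``weak limit of specifications, hence Gibbs'' from Theorem \ref{t1}. That is a worthwhile elaboration, not a different method.

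One point of substance concerns the constant. You correctly observe that the crude integral comparison only yields $f(\alpha)-1\le\sqrt{\pi/\alpha}$ and hence $\alpha^*\le\pi\big(1.44504^{1/d}-1\big)^{-2}$; that is in fact all the paper proves --- see \eqref{explicit} and the numerical values following it ($76.9176$ for $d=2$ equals $\pi\cdot 24.48$). The bound displayed in the statement of Theorem \ref{t3} omits the factor $\pi$ and is therefore not established by Section \ref{s5}; it appears to be a typo in the statement. Your proposed repair, $f(\alpha)-1\le 1/\sqrt{\alpha}$ in the relevant range, is numerically sound (the candidate thresholds $(1.44504^{1/d}-1)^{-2}$ exceed $5$ for every $d\ge 1$, where $2\sum_{k\ge 1}e^{-\alpha k^2}$ is already of order $e^{-5}$), but you leave it as a promissory note, so as written your proof of the displayed inequality is incomplete. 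Either supply that elementary estimate or restore the factor $\pi$ so as to match \eqref{explicit}.
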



\subsection{Sketch of the proofs}
\label{s2.1} 

 {\it Identity boundary conditions.} Consider a finite
$\Lambda\subset \Z^d$ and recall $S_\Lambda=S_{\Lambda|I}$ is the set
of permutations that equal the identity outside of $\Lambda$. 

A finite-cycle permutation $\sigma\in S$ can be identified with the
configuration $\eta\in\{0,1\}^\Gamma$ defined by
$\eta(\gamma)=\one\{\gamma\in\sigma\}$. Thus $S_\Lambda$ can be described as a subset of $\{0,1\}^{\Gamma_\Lambda}$:
\begin{align}
\label{iii}
  S_\Lambda= \Big\{\eta\in\{0,1\}^{\Gamma_\Lambda}: 
\eta(\gamma)\eta(\gamma')=0\hbox{ if }\{\gamma\}\cap\{\gamma'\}\neq\emptyset,\;\hbox{ for all } \;\gamma,\gamma'\in \Gamma_\Lambda
\Big\}\,.
\end{align}
Recall the definition \eqref{weight} of weight of a cycle $\gamma$. 
The specification in $\Lambda$ with identity boundary conditions \eqref{specif} can now be written as
\begin{equation}
  \label{mulambdaeta}
G_{\Lambda}(\eta)= \frac{1}{Z_\Lambda}\prod_{\gamma\in\Gamma_\Lambda}
w(\gamma)^{\eta(\gamma)},\qquad \eta\in S_\Lambda. 
\end{equation}

We interpret the  measure $G_\Lambda$  as the distribution of the gas of
cycles with weights $w$ and interacting by exclusion. This is the setup proposed in
\cite{MR1849182} to study the contour representation of the low temperature
Ising model.

Let now  $S^o= \{0,1,\dots\}^\Gamma$. Note that in $S^o$ cycles may have intersecting support; indeed, the same cycle 
may have multiplicity larger than 1.
Given a configuration $\eta\in S^o$, $\eta(\gamma)$ counts the
number of times the cycle $\gamma$ is present in $\eta$. Let $\mu^o$ be the
product measure on $S^o$ with marginal Poisson$(w(\gamma))$ for each $\gamma \in \Gamma$. If $\eta^o$ has
law $\mu^o$, then the random variable $\eta^o(\gamma)$ is Poisson with mean
$w(\gamma)$, and the random variables $\eta^o(\gamma)$, $\gamma\in\Gamma$ are independent.
For finite $\Lambda$,
$G_\Lambda$ is just the law $\mu^o$ conditioned to $S_\Lambda$:
\begin{align}
  \label{tl1}
G_\Lambda = \mu^o(\cdot\,|S_\Lambda)\,.
\end{align}
We claim that for large enough $\alpha$ we can construct a
Poisson measure on $S^o$ conditioned to the event that each cycle is present at most once, and present cycle supports are disjoint. That is, the measure is supported on the set of configurations
associated to finite-cycle permutations of~$\Z^d$. Since this set has
zero $\mu^o$-probability, an argument is required to give a proper
sense to this notion. 
For $\alpha$ large we construct $\mu$ as the invariant measure
for a continuous-time birth and death process of cycles interacting by exclusion, and show that it
concentrates on finite-cycle permutations. We also prove that $\mu$ is the limit as $\Lambda\to\infty$ of $G_\Lambda$ given by \eqref{tl1}.

Given a cycle $\gamma\in\Gamma$, consider the rates of a continuous-time birth and
death process on $\{0,1,\dots\}$ defined by
\begin{align}
  \label{qg}
\hbox{birth rates: }q_\gamma(k,k+1):=w(\gamma),\;\; \hbox{death rates: }q_\gamma(k+1,k):=k+1,\;\; k\ge 0.
\end{align}

We construct birth and death processes with the above rates as a function of a Poisson process. Let $\cN$ be a Poisson process on $\Gamma\times\R\times\R^+$ with rate
measure $w(\gamma)\times dt\times e^{-s}ds$. If the
point $(\gamma,t',s')\in\cN$, we say that a cycle $\gamma$ is born at time $t'$ and lives
until $t'+s'$.  Define $\eta^o$
as the number of cycles $\gamma$ \emph{alive} at time $t$.
By construction $(\eta^o_t(\gamma),\,t\in\R)$ is a
time-stationary continuous-time birth and death process with rates $q_\gamma$ given in \eqref{qg}; that is, at any time a new copy of a cycle $\gamma$ is born at rate
$w(\gamma)$, whereas existing copies die independently at rate~$1$. The marginal distribution of $\eta^o_t(\gamma)$ is Poisson with mean $w(\gamma)$, for each $t\in\R$.
Letting $\eta^o_t:=(\eta^o_t(\gamma):\gamma\in\Gamma)$, the process $(\eta^o_t,\,t\in\R)$ is a family of stationary independent birth and death processes with 
marginal distribution $\mu^o$ at any time $t$. 

Our goal is to
perform such a graphical construction for a birth and death process
with the same rates, subject to an exclusion rule as follows. Now
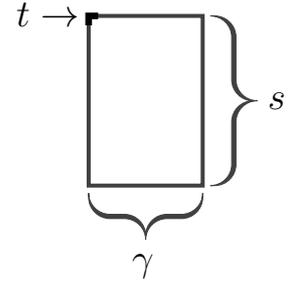
\begin{wrapfigure}{r}{0.25\textwidth}
\vspace{-20pt}
\begin{center}
\psset{unit=0.5cm}
\begin{pspicture}(12,8)(5,0.5)
\psset{linewidth=1.5pt,arrowsize=12pt}
\large
\psset{linecolor=darkgray,linewidth=1.5pt}
\pspolygon(7,2.5)(10,2.5)(10,7)(7,7)
\rput[r]{0}(6.75,7.05){$t\rightarrow$}
\psline[linewidth=2.5pt,linecolor=black](7,6.75)(7,7)(7.25,7)
\psbrace[linewidth=0.05pt,fillcolor=darkgray,nodesepA=4pt,ref=lC](10.2,2.5)(10.2,7){$s$}
\psbrace[linewidth=0.05pt,fillcolor=darkgray,nodesepB=10pt,nodesepA=-1pt,rot=90,ref=c](7,2.3)(10,2.3){$\gamma$}
\end{pspicture}
\end{center}
\caption{The representation of the point $\varphi=(\gamma,t,s)$. Time is going down.}
\label{f40}
\vspace{-10pt}
\end{wrapfigure}
the point $(\gamma,t,s)\in\cN$ represents a birth attempt of a
cycle $\gamma$ at time $t$ (see Figure~\ref{f40}), but the cycle will be effectively born
only if its support $\{\gamma\}$ does not intersect the support of any
of the cycles already present at that time $t$. When the process is
restricted to a finite set $\Lambda$, the points in
$\{(\gamma,t,s)\in\cN$, $\gamma\in\Gamma_\Lambda\}$ can be ordered by
their birth time $t$. Since the free process is empty infinitely
often: $\eta^o_t(\gamma)= 0$ for all $\gamma\in\Gamma_\Lambda$ for
infinitely many positive and negative times, it is possible to
iteratively decide for each $(\gamma,t,s)$ if it actually produces a
birth of $\gamma$ in the model with exclusion, or not. We so construct
a stationary birth and death process $(\eta^\Lambda_t,\,t\in\R)$ on
$\Gamma^\Lambda$ with rates $(q_\gamma,\gamma\in\Gamma^\Lambda)$
subjected to the exclusion condition on cycles in $\Lambda$. The
marginal distribution of $\eta^\Lambda_t$ is $G_\Lambda$.

In infinite volume the above argument does not work because the
configuration is never empty.  Instead, for each point
$(\gamma,t,s)\in\cN$ one can look for the points of $\cN$ born prior
to $t$ that could interfere with the birth of the cycle $\gamma$ at
time $t$. This set is called the \emph{clan of ancestors} of $(\gamma,
t,s)\in\Gamma\times\R\times\R^+$. If the clan of ancestors of any point is finite with probability
one, then it is possible to construct the stationary loss network of
finite cycles in $\Z^d$. We call $(\eta_t,\, t\in\R)$ the resulting
Markov process, obtained as a deterministic function of $\cN$. Let us
suggestively denote by $\mu$, the notation previously used to name the
Gibbs measure, the distribution of the permutation with cycles
$\eta_t$ for a given time $t$.  Since the construction is
time-stationary, the measure $\mu$ does not depend on $t$: it is an
invariant measure for the process. In fact one can check that $\mu$
is reversible for the process. We show that $\mu$ is the thermodynamic
limit of $G_\Lambda$ and the unique invariant measure for the
process~$(\eta_t)$.

In order to prove that $\mu$ is the thermodynamic limit of $G_\Lambda$, we construct a stationary family of processes
$(\eta^\Lambda_t,\,t\in\R)$ for any $\Lambda
\subset \Z^d$ as a function of a unique realization $\cN$ of the
Poisson process; a coupling. For finite $\Lambda$, the marginal
distribution of $\eta^\Lambda_t$ is $G_\Lambda$.  We use the
finiteness of the clan of ancestors to show that for each finite-cycle
$\gamma$, $\eta^\Lambda_t(\gamma)$ converges to $\eta_t(\gamma)$ as
$\Lambda\nearrow\Z^d$, for almost all realizations of the point
process $\cN$. In particular, this proves that $G_\Lambda$ converges
weakly to $\mu$ and yields several properties of the limit.

To show that the clan of ancestors of a point $(\gamma,t,s)$ is finite
we dominate it by a multitype branching process and then show that the condition $\beta(\uU,\alpha)<1$ is a sufficient condition for the branching process to die out. We give more details of these processes
in Section \ref{s3}.  

For any fixed $t\in\R$, the process $(\eta^\Lambda_t: \Lambda\subseteq \Z^d)$ satisfies the properties attributed to the process $(\zeta_\Lambda,\,\Lambda\subseteq \Z^d)$ in Theorem \ref{t1}.
 \\
 
\emph{$v$-jump boundary conditions. } The specifications associated to the potential 
\begin{align}
\label{vv}
\uU_v(y):= \uU(y+v)-\uU(y)
\end{align}
 with identity boundary conditions are given by
$G_{\Lambda|\tau_v}\tau_{-v}$ on  $S_\Lambda$, for any finite $\Lambda\subseteq \Z^d$. We prove that for $\alpha^*_v(\uU)<\infty$, $\uU_v$ satisfies the conditions of Theorem \ref{t1} to obtain a process $(\zeta_{\Lambda,v},\,\Lambda\subset\Z^d)$ on $(S_{\Lambda},\,\Lambda\subseteq\Z^d)$ with the properties stated in that theorem.  We then use the fact that $\tau_v\zeta_{\Lambda,v}$ has law $G_ {\Lambda|\tau_v}$ to obtain Theorem \ref{t2}. \\

\emph{$v$-jump boundary conditions for the Gaussian potential. }  When the potential  is Gaussian, $\uU=\|\cdot\|^2$, we have  $G_{\Lambda|\tau_v}=G_{\Lambda}\tau_v$, a fact proven in Section \ref{s4}. This is the key to  the proof of Theorem~\ref{t3}.



\section{Loss networks of finite cycles}\label{s3}

We here construct the invariant measure
of a loss network of cycles and show that it is the Gibbs measure related to the specifications
$G_\Lambda$ with identity boundary conditions. The section consists of a review of
\cite{MR1849182} described in terms of cycles instead of contours.

\paragraph{Loss network}
Take a potential $V$ and a set $\Lambda \subset \Z^d$. Recall the definition \eqref{iii} of $S_\Lambda\subset\{0,1\}^{\Gamma_\Lambda}$. We introduce a continuous-time Markov process in  $S_\Lambda$
called \emph{loss network} of finite cycles. We say that two cycles are
\emph{compatible} if their supports are disjoint. Given a configuration
$\eta\in S_\Lambda$ of the process, we add a new cycle $\gamma$ at rate
$w(\gamma)$, if it is compatible with $\eta$, that is, if $\gamma$ is compatible
with all cycles $\gamma'$ with $\eta(\gamma')=1$. If $\gamma$ and $\eta$ are not
compatible, then the cycle is not added and the attempt is lost, hence the name
loss network.  Finally, any cycle in $\eta$ is deleted at rate one.  Loss networks
were introduced as stochastic models of a telecommunication network in which
calls are routed between nodes around a network.  In our case the nodes are the $d$-dimensional integers and a call uses the nodes in the support of a
non-identity cycle. Each node has capacity to support at most one
call and hence arriving calls that would occupy an already busy node are lost.  An
account of the properties of loss networks can be found in Kelly
\cite{MR1111523}.

Denote $\gamma\sim\eta$ if 
 $\gamma$ is compatible with $\eta$; in particular $\gamma\sim\eta$
implies $\eta(\gamma)=0$.
The loss network process on $S_\Lambda\subset\{0,1\}^{\Gamma_\Lambda}$ has formal generator 
\begin{equation}
  \label{generator-loss}
  {\cL}^\Lambda f(\eta) = \sum_{\gamma\in \Gamma_\Lambda} w(\gamma)
  \one_{\{\gamma\sim\eta\}}\,[f(\eta+\delta_\gamma)-f(\eta)] +
  \sum_{\gamma\in\Gamma_\Lambda} \,[f(\eta-\delta_\gamma)-f(\eta)],
\end{equation}
where $f$ is a test function, and $\delta_\gamma(\gamma')= 1$ if and only if
$\gamma'=\gamma$. 
When $\Lambda$ is finite, the
loss network is a well defined, irreducible Markov process on a finite state space, with a unique
invariant measure.  

The next
lemma shows that $G_\Lambda$ defined in  \eqref{mulambdaeta} is reversible for the loss network $\eta^\Lambda_t$; the proof is left to the reader. 
\begin{lemma}
  \label{finite-reversible} Let $\Lambda$ be finite. The measure $G_{\Lambda}$  is reversible for the dynamics
\eqref{generator-loss}. In particular, this is the unique invariant measure, and the weak
limit of the distribution of the process starting from any initial permutation as $t\to\infty$.
\end{lemma}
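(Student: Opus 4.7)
The plan is to establish reversibility by a direct detailed-balance computation, then invoke standard finite Markov chain theory for uniqueness and convergence.

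First, I would write out the detailed balance condition. The only transitions with positive rate out of $\eta\in S_\Lambda$ are $\eta\to\eta+\delta_\gamma$ (birth, rate $w(\gamma)$, requiring $\gamma\sim\eta$) and $\eta\to\eta-\delta_\gamma$ (death, rate $1$, requiring $\eta(\gamma)=1$). These are each other's reverses in pairs: if $\gamma\sim\eta$ and $\eta':=\eta+\delta_\gamma$, then $\eta'\in S_\Lambda$ and $\eta'-\delta_\gamma=\eta$. So it suffices to check
\begin{equation*}
G_\Lambda(\eta)\,w(\gamma)\,\mathbf{1}\{\gamma\sim\eta\} \;=\; G_\Lambda(\eta+\delta_\gamma)\cdot 1,
\end{equation*}
whenever both sides are well defined. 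Using the explicit product form \eqref{mulambdaeta}, the ratio $G_\Lambda(\eta+\delta_\gamma)/G_\Lambda(\eta)$ equals $w(\gamma)^{(\eta(\gamma)+1)-\eta(\gamma)}=w(\gamma)$, since $\eta(\gamma)=0$ when $\gamma\sim\eta$. The identity therefore reduces to $w(\gamma)=w(\gamma)$, establishing reversibility.

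Next I would argue irreducibility on $S_\Lambda$. From any $\eta$ the chain can reach the empty configuration $\mathrm{I}$ through a finite sequence of death transitions (each cycle currently present dies at rate $1$), and from $\mathrm{I}$ it can reach any target $\eta'\in S_\Lambda$ by birthing its cycles one at a time in arbitrary order — each intermediate configuration lies in $S_\Lambda$ because the cycles of $\eta'$ are pairwise compatible by \eqref{iii}, so compatibility with the current partial configuration holds at every step, making the corresponding birth rate positive.

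Finally, because $\Lambda$ is finite, $\Gamma_\Lambda$ is finite and hence $S_\Lambda$ is a finite state space. Reversibility gives $G_\Lambda$ as an invariant measure, irreducibility upgrades this to uniqueness, and a finite irreducible continuous-time Markov chain converges in distribution to its unique invariant measure from any initial state. The only subtlety worth a line of care is the compatibility bookkeeping in the irreducibility argument, but once detailed balance is checked there is no real obstacle.
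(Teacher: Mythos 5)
Your proof is correct and is precisely the standard argument the paper has in mind when it leaves this lemma to the reader: detailed balance between the birth $\eta\to\eta+\delta_\gamma$ (rate $w(\gamma)\one\{\gamma\sim\eta\}$) and the death $\eta+\delta_\gamma\to\eta$ (rate $1$) follows immediately from the product form \eqref{mulambdaeta}, and irreducibility via the empty configuration together with finiteness of $S_\Lambda$ gives uniqueness and convergence. No gaps.
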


In the following we show that when 
$\alpha>\alpha^*(\uU)$ given in \eqref{alpha*} there exists a stationary process with generator
\eqref{generator-loss} for any $\Lambda\subseteq\Z^d$. The proof relies on a coupling argument applying the Harris
graphical construction of the process: to each configuration of an appropriate Poisson process $\cN$
we associate a realization of the loss network, $\cN\mapsto (\eta^\Lambda_t)$, for any $\Lambda\subseteq\Z^d$. 
We now introduce the basic elements of the argument.

\emph{The Poisson process. } Let $\cN$ 
be a Poisson process on $\Gamma\times\R\times\R^+$ with intensity
measure 
\[
d(\gamma,t,s) = w(\gamma)\, dt\, e^{-s} ds\,.
\] 
This process can be thought of as a product of independent Poisson processes on
$\R\times \R^+$, indexed by $\gamma\in\Gamma$. 

\emph{The free process. } 
Given the Poisson process $\cN$, define the {\em free process} $(\eta^o_t, t\in \R)$ on $\{0,1,\dots\}^\Gamma$ by
\begin{equation}
  \label{stationary-free-process}
  \eta^o_t(\gamma):= \sum_{(\gamma,t',s')\in\cN} \one\{ t'\le t < t'+ s'\}\,.
\end{equation}
If a point $(\gamma,t,s)\in\cN$, we say that a cycle $\gamma$ is born
at time $t$ and lives $s$ time units. We represent it as a cylinder with base
$\gamma$, height $s$ with its higher point located at $t$. See Figure~\ref{f40} where
the basis is represented by a segment.

The construction implies that cycles of type $\gamma$ are born independently at rate
$w(\gamma)$, and each of them lives for an exponential time of parameter 1; there
may be more than one cycle of type~$\gamma$ present at any given time. The process
$\eta^o_t$ is thus obtained as the product of independent birth and death processes $\big(\eta^o_t(\gamma):{\gamma \in \Gamma}\big)$, with birth rates $w(\gamma)$ and death rate~1.  The generator of
$\eta^o_t$ is given by
 \begin{equation*}\label{free}
\cL^o f(\eta) = \sum_{\gamma\in \Gamma} w(\gamma)
\,[f(\eta+\delta_\gamma)-f(\eta)] +
\sum_{\gamma\in\Gamma}\eta(\gamma) \,[f(\eta-\delta_\gamma)-f(\eta)]\,,
\end{equation*}
where $f:\{0,1,\dots\}^\Gamma \to\R$ is any local test function in the domain of
${\cL}^{o}$.
It is easy to see that the product measure $\mu^o$ on $\{0,1,\dots\}^\Gamma$ with Poisson marginals
\[
 \mu^o(\eta:\eta(\gamma)=k) = \frac{e^{-w(\gamma)}(w(\gamma))^k}{k!}
\]  
is reversible for the free process. Indeed, this is the law of the
configuration $\eta^o_t$ defined in \eqref{stationary-free-process}, for any
fixed $t\in\R$. 

\emph{The clan of ancestors. }
We will construct a stationary version of the loss network in infinite volume starting from
the stationary free process, by simply erasing those cycles that violate the exclusion
condition at birth. In order to make sense of this construction we need to consider the clan of ancestors of
each point $(\gamma, t, s)\in\Gamma\times\R\times\R^+$, as follows. 

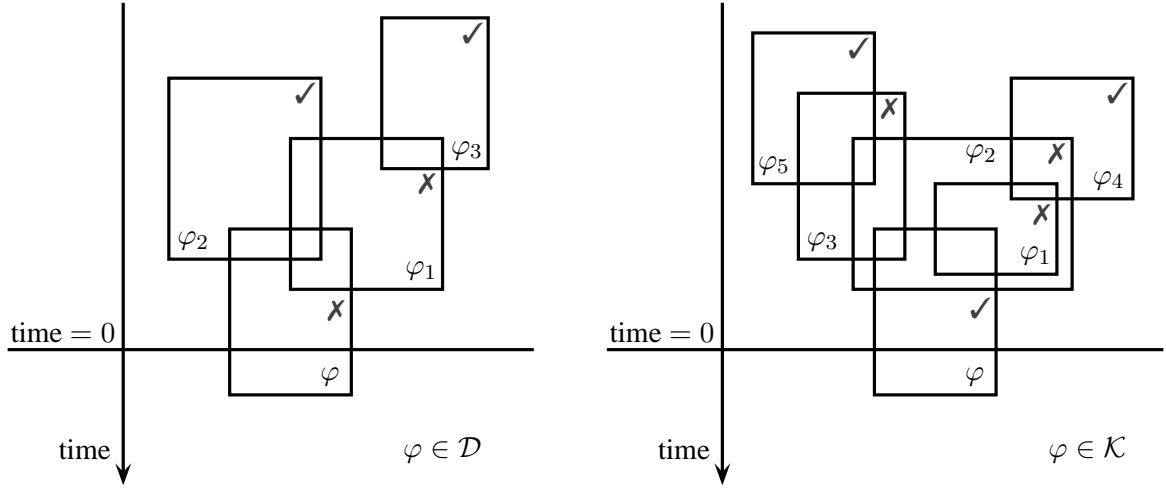
\begin{figure}
\begin{center}
\psset{unit=0.4cm}
\begin{pspicture}(40,16)(0,-1)  
\small
\psset{linewidth=1.2pt,arrowsize=6pt}
\rput{0}(2.3,0){
\psline(-1.3,4)(16,4)
\psline{->}(2.5,15.5)(2.5,-0.5)
\rput[rb]{0}(2.25,4.25){time $=0$}
\rput[r]{0}(2.1,0.7){time}
\pspolygon(6,2.5)(10,2.5)(10,8)(6,8)
\rput[B]{0}(9.3,3){$\varphi$}
\rput{0}(9.5,5.3){\color{darkgray}\ding{55}}
\pspolygon(8,6)(13,6)(13,11)(8,11)
\rput[B]{0}(12.3,6.5){$\varphi_1$}
\rput{0}(12.5,9.5){\color{darkgray}\ding{55}}
\pspolygon(4,7)(9,7)(9,13)(4,13)
\rput[B]{0}(4.8,7.5){$\varphi_2$}
\rput{0}(8.5,12.5){\color{darkgray}\ding{51}}
\pspolygon(11,10)(14.5,10)(14.5,15)(11,15)
\rput[B]{0}(13.8,10.5){$\varphi_3$}
\rput{0}(14,14.5){\color{darkgray}\ding{51}}
\rput{0}(13,0.7){$\varphi\in\bD$}
}

\rput{0}(22,0){
\psline(-1.3,4)(17,4)
\psline{->}(2.5,15.5)(2.5,-0.5)
\rput[rb]{0}(2.25,4.25){time $=0$}
\rput[r]{0}(2.1,0.7){time}
}
\rput{0}(23.5,0){
\pspolygon(6,2.5)(10,2.5)(10,8)(6,8)
\rput[B]{0}(9.3,3){$\varphi$}
\rput{0}(9.5,5.4){\color{darkgray}\ding{51}}
\pspolygon(3.5,7)(7,7)(7,12.5)(3.5,12.5)
\rput{0}(4.3,7.6){$\varphi_3$}
\rput{0}(6.5,12){\color{darkgray}\ding{55}}
\rput{0}(-2,-3.5){
\pspolygon(10,10)(14,10)(14,13)(10,13)
\rput[B]{0}(13.3,10.5){$\varphi_1$}
\rput{0}(13.5,12){\color{darkgray}\ding{55}}}
\rput{0}(-0.5,-1){
\pspolygon(11,10)(15,10)(15,14)(11,14)
\rput[B]{0}(14.2,10.5){$\varphi_4$}
\rput{0}(14.5,13.5){\color{darkgray}\ding{51}}}
\rput{0}(-9,-0.5){
\pspolygon(11,10)(15,10)(15,15)(11,15)
\rput[B]{0}(11.7,10.5){$\varphi_5$}
\rput{0}(14.5,14.5){\color{darkgray}\ding{51}}}
\rput{0}(13,0.7){$\varphi\in\bK$}
}
\pspolygon(28.8,6)(36,6)(36,11)(28.8,11)
\rput{0}(33,10.5){$\varphi_2$}
\rput{0}(35.5,10.5){\color{darkgray}\ding{55}}
\end{pspicture}
\end{center}
\caption{The clan of ancestors of the point $\varphi=(\gamma,t,s)$ in two scenarios. On the left $\varphi$ is deleted, while on the right it is kept. }
\label{f50}
\end{figure}

The first generation of ancestors of  $\varphi=(\gamma,t,s)$ is the subset of $\cN$
defined by
\[
 \bA_1^\varphi:=\{(\gamma',t',s') \in \cN \colon \gamma'\not\sim\gamma, \,  t'<t<t'+s'\}.
\]
where, as before,  two cycles $\gamma$ and $\gamma'$ are incompatible,
$\gamma\not\sim\gamma'$, if their supports have non empty intersection; in
particular, a cycle is incompatible with itself: $\gamma\not\sim\gamma$. Iteratively, the
$(n+1)$-th generation of ancestors of $\varphi$ is the union of the first
generation of ancestors of the points belonging to the $n$-th generation of
ancestors of $\varphi$, that is,
\[
 \bA_{n+1}^\varphi:=\bigcup_{\varphi' \in \bA_n^\varphi} \bA_1^{\varphi'}.
\]
The \emph{clan of ancestors} of $\varphi$ is the union of all generations of
ancestors: 
\begin{equation}
\label{clan}
\bA^\varphi:=\cup_{n\ge 1}\bA_n^\varphi.
\end{equation}
See Figure~\ref{f50} with two scenarios. On the left $\bA_1^\varphi
=\{\varphi_1,\varphi_2\}$ and $\bA_2^\varphi=\{\varphi_3\}$. On the right $\bA_1^\varphi
=\{\varphi_1,\varphi_2,\varphi_3\}$,  $\bA_2^\varphi=\{\varphi_2, \varphi_3, \varphi_4,\varphi_5\}$,  $\bA_3^\varphi=\{\varphi_3, \varphi_4,\varphi_5\}$ and
$\bA_4^\varphi=\{\varphi_5\}$.

\emph{Kept and deleted points. }  Assume $\bA^\varphi$ finite for all
$\varphi\in \Gamma \times \R\times \R^+ $, for almost all realizations of $\cN$. Fix $\bD_0=\emptyset$, and
for $n\ge1$ let 
\[
\bK_n := \{\varphi\in\cN\,:\,
\bA_1^\varphi\setminus \bD_{n-1}=\emptyset\}, \qquad 
\bD_n := \{\varphi\in\cN\,:\, \bA_1^\varphi\cap\bK_n\neq\emptyset\}.
\]
Let $\bK:=\cup_n\bK_n\subseteq \cN$ be the set of \emph{kept} points,
and $\bD:=\cup_n\bD_n\subseteq \cN$ be the set of \emph{deleted}
points. As a consequence of the finiteness of the clans of ancestors,
every point is either kept or deleted. Indeed, to determine whether a
point $\varphi$ is in $\bK$ or $\bD$, it suffices to inspect its clan
of ancestors $\bA^\varphi$.
In Figure \ref{f50} we have checked the kept points and crossed
the deleted ones.

\emph{Stationary loss network. } Assume $\bA^\varphi$ finite for all
$\varphi\in \Gamma \times \R\times \R^+ $, for almost all realizations of $\cN$.
 Define the \emph{stationary
loss network} $(\eta_t, t\in \R)$ by
\begin{equation}
\label{def.eta}
 \eta_{t}(\gamma):= \sum_{(t',s'): (\gamma,t',s')\in\cN} \one\{t'\le t<t'+s'\} \, \one\{(\gamma,t',s')\in\bK\}.
\end{equation}
This is the set of cycles associated to kept points alive at time $t$.
Note that $\eta_t(\gamma) \in
\{0,1\}$. 
The process $(\eta_t, t\in \R)$ is stationary by construction, let us call $\mu$ its stationary distribution, 
\begin{equation}
\label{mu}
\mu:= \hbox{ law of }\eta_t, \hbox {for any } t\in \R.
\end{equation}
The reader can prove the following result. 
\begin{proposition}
  \label{loss-stationary}
Assume $\bA^\varphi$ finite for all
$\varphi\in \Gamma \times \R\times \R^+ $, for almost all realizations of $\cN$.
 Then,
the process $(\eta_t,\,t\in\R)$ defined in \eqref{def.eta} is Markov with
generator \eqref{generator-loss} and invariant measure $\mu$ as in \eqref{mu}.
\end{proposition}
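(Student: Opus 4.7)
The plan is to verify in turn three things: (a) that $\eta_t$ defined in \eqref{def.eta} satisfies the exclusion constraint and is therefore a legitimate state for the loss network; (b) that the Poisson construction delivers exactly the transition rates encoded in \eqref{generator-loss}; and (c) that the measure $\mu$ is invariant. The standing hypothesis that every $\bA^\varphi$ is a.s.\ finite is what lets the infinite-volume argument go through.

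For the exclusion property I would argue by contradiction. Suppose $\eta_t$ contains two incompatible cycles $\gamma_1\not\sim\gamma_2$. Then there exist kept points $\varphi_i=(\gamma_i,t_i,s_i)\in\bK$ with $t_i\le t<t_i+s_i$, and without loss of generality $t_1<t_2$, so $\varphi_1\in\bA_1^{\varphi_2}$. If $\varphi_1\in\bK_n$ for some $n$, then the definition of $\bD_n$ forces $\varphi_2\in\bD_n$, contradicting $\varphi_2\in\bK$. Hence $\eta_t$ takes values in the analogue of \eqref{iii} for $\Lambda=\Z^d$, i.e., in configurations of pairwise disjoint cycles.

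For the Markov property and the identification of the generator, I would combine a finite-volume approximation with a direct reading of jump rates from $\cN$. For each finite $\Lambda\subset\Z^d$, the same graphical recipe applied only to points with $\gamma\in\Gamma_\Lambda$ yields a process $(\eta^\Lambda_t)$, which by Lemma~\ref{finite-reversible} is Markov with generator $\cL^\Lambda$. Between consecutive Poisson points the state is constant; at a new point $\varphi=(\gamma,t',s')$ the state increases by $\delta_\gamma$ exactly when $\gamma\sim\eta^\Lambda_{t'-}$, because the kept/deleted status of $\varphi$ is determined by its clan of ancestors, whose first generation at the birth instant is exactly the set of cycles alive at $t'$; each cycle present is removed at time $t'+s'$, and since the lifetime attribute satisfies $s'\sim\exp(1)$ independently across points, this amounts to an independent death at rate $1$. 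These rates match \eqref{generator-loss} restricted to $\Gamma_\Lambda$. Letting $\Lambda\nearrow\Z^d$, the a.s.\ finiteness of $\bA^\varphi$ ensures the pointwise convergence $\eta^\Lambda_t(\gamma)\to\eta_t(\gamma)$ for every $\gamma$, and the martingale problem tested against local functions passes to the limit, giving that $(\eta_t)$ is Markov with the generator \eqref{generator-loss}. Stationarity of $\mu$ is then immediate: the intensity $w(\gamma)\,dt\,e^{-s}\,ds$ of $\cN$ is invariant under $(\gamma,t,s)\mapsto(\gamma,t+h,s)$, and $\cN\mapsto(\eta_t)$ is time-equivariant, so $\eta_{t+h}$ and $\eta_t$ share the same law $\mu$.

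The main obstacle I anticipate is justifying the Markov property in infinite volume, since the prescription defining $\eta_t$ formally uses the entire past of $\cN$. The cleanest way to handle this is to combine finiteness of $\bA^\varphi$ with the memoryless property of the exponential lifetimes: conditional on a kept cycle being alive at time $t$, its residual lifetime is $\exp(1)$ independent of the past; and any birth attempt $\varphi'$ with birth time $t'>t$ has a clan of ancestors whose pre-$t$ portion can only interact with $\varphi'$ through cycles that remain alive at $t$, i.e.\ through $\eta_t$ itself. Consequently, $\eta_t$ together with $\cN\cap(\Gamma\times(t,\infty)\times\R^+)$ suffices to reconstruct $(\eta_s)_{s\ge t}$, which is precisely the Markov property.
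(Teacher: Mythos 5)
The paper leaves this proof to the reader, and your argument is exactly the intended one: exclusion via the kept/deleted dichotomy, identification of the birth rate $w(\gamma)\one_{\{\gamma\sim\eta\}}$ and unit death rate from the graphical construction (with finite-volume approximation justified by the finiteness of the clans), and stationarity from the time-translation invariance of the intensity of $\cN$. The only step you use implicitly is that $\bK\cap\bD=\emptyset$, so that $\varphi_2\in\bD_n$ genuinely contradicts $\varphi_2\in\bK$; this is the content of the paper's remark that when all clans are finite every point is either kept or deleted, and is proved by induction along the finite clan $\bA^{\varphi_2}$.
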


\emph{Thermodynamic limit. }The set of kept points is a deterministic function of $\cN$:
$\bK=\bK(\cN)$. Since the process $(\eta_t,\,t\in\R)$ is a function of the kept points, it
is also a function of $\cN$: $(\eta_t)=(\eta_t)(\cN)$. Given $\Lambda\subset\Z^d$ define the Poisson process 
associated to the cycles in $\Gamma_\Lambda$,
\begin{equation*}
  \label{nl}
  \cN^\Lambda:=\{(\gamma,t,s)\in\cN\,:\, \{\gamma\}\subset\Lambda\},
\end{equation*}
 the corresponding set of kept points 
$\bK^\Lambda:=\bK(\cN^\Lambda)$, and the loss network of cycles in $\Lambda$
\begin{equation}
  \label{el}
  (\eta^\Lambda_t):=(\eta_t)(\cN^\Lambda).
\end{equation}
Clearly $\cN^\Lambda$ is a function of $\cN$. When the clan of ancestors of any point is finite for almost all realizations of $\cN$, we have managed to define all 
processes $(\eta^\Lambda_t,\,t\in\R)$, $\Lambda\subseteq\Z^d$,  as a function of the same realization $\cN$ of the point process. In particular notice that $\eta^{\Z^d}_t=\eta_t$. 

When $\Lambda$ is finite, the finiteness of the clan of ancestors is guaranteed
and in this case $(\eta^\Lambda_t)$ is an irreducible Markov process in the
finite state space $S_\Lambda\subset\{0,1\}^{\Gamma_\Lambda}$ with generator $\cL^\Lambda$ given by
\eqref{generator-loss}.
By Lemma \ref{finite-reversible}, the distribution of $\eta^\Lambda_t$ is the
measure $G_\Lambda$, which is reversible for the process, for any $t\in\R$.

We now state and prove the thermodynamic limit.  

\begin{theorem}
  \label{tl}
 Existence of almost sure thermodynamic limit.
   
 \noindent Assume $\bA^\varphi$, the clan of ancestors of $\varphi$,  is finite for all $\varphi$ for almost all realizations of $\cN$. Then
  for any fixed $t\in\R$ and $\gamma\in\Gamma$, $\lim_{\Lambda\nearrow\Z^d} \eta_t^\Lambda(\gamma) =
  \eta_t(\gamma)$ almost surely. In particular, as $\Lambda\nearrow\Z^d$,
  $G_{\Lambda}$ converges weakly to $\mu$, the stationary  law of $\eta_t$ in \eqref{mu}.
\end{theorem}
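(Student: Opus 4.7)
The plan is to prove the pointwise statement $\eta^\Lambda_t(\gamma)\to \eta_t(\gamma)$ by a stabilization argument: I will show that for almost every realization of $\cN$ there is a finite random set $B=B(t,\gamma)\subset \Z^d$ such that $\eta^\Lambda_t(\gamma)=\eta_t(\gamma)$ whenever $\Lambda\supseteq B$. The weak convergence $G_\Lambda\to\mu$ will then follow from the general fact that coordinatewise almost sure convergence entails convergence in distribution on a product space.

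To construct $B$, first observe that the number of points $\varphi=(\gamma,t',s')\in\cN$ alive at time $t$ (i.e.\ with $t'\le t<t'+s'$) is Poisson with finite mean $w(\gamma)=w(\gamma)\int_0^\infty e^{-u}du$, and is therefore almost surely finite. Call these points $\varphi_1,\dots,\varphi_N$. By the standing hypothesis each clan $\bA^{\varphi_i}$ is finite, so I take $B$ to be $\{\gamma\}$ together with the union of the cycle supports $\{\gamma'\}$ appearing as $(\gamma',\cdot,\cdot)\in \bigcup_i \bA^{\varphi_i}$. Fix any $\Lambda\supseteq B$. Because $\cN^\Lambda$ is obtained from $\cN$ by deleting those points whose cycle supports are not contained in $\Lambda$, the inclusion $\bigcup_i \bA^{\varphi_i}\subset \cN^\Lambda$ combined with the tower property $\bA^{\psi}\subset \bA^{\varphi}$ for $\psi\in \bA^\varphi$ yields $\bA^{\varphi_i}(\cN^\Lambda)=\bA^{\varphi_i}(\cN)$ for every $i$. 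Since the recursion defining $\bK_n,\bD_n$ is driven only by the first-generation ancestor sets, a straightforward induction on $n$, which terminates after finitely many steps thanks to the finiteness of the clans, gives $\varphi_i\in \bK\Leftrightarrow \varphi_i\in \bK^\Lambda$ for each $i$. Substituting into the defining formula \eqref{def.eta}, and noting that no point $(\gamma,t',s')$ with $\{\gamma\}\subset\Lambda$ contributing to $\eta^\Lambda_t(\gamma)$ has been missed by this list, we conclude $\eta^\Lambda_t(\gamma)=\eta_t(\gamma)$.

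For the weak convergence claim, I use that $\eta^\Lambda_t$ has law $G_\Lambda$ by Lemma \ref{finite-reversible} for finite $\Lambda$, while $\eta_t$ has law $\mu$ by construction. Almost sure convergence at every coordinate $\gamma\in\Gamma$ implies convergence in distribution of every finite-dimensional marginal, and the product topology on $\{0,1\}^\Gamma$, which is inherited by $S$ through the identification with gases of cycles, is generated by such cylinder events; hence $G_\Lambda\to \mu$ weakly. The only delicate point in the whole argument is the inductive step in the second paragraph: one must verify that removing the points of $\cN\setminus\cN^\Lambda$ leaves the $\bK/\bD$ classification of each $\varphi_i$ unchanged. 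This is ensured because all ancestry information needed to classify $\varphi_i$ is already contained in $B$, and the recursion is well-founded precisely because of the finiteness hypothesis on the clans of ancestors.
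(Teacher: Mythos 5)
Your proposal is correct and follows essentially the same stabilization argument as the paper: the paper defines $\Lambda_t(\cN,\gamma)$ as the union of supports of cycles in the clan of ancestors of the single abstract point $(\gamma,t,s)$ (which, by the convention $\gamma\not\sim\gamma$, already contains all alive $\gamma$-points you enumerate together with their clans), and concludes that for $\Lambda\supseteq\Lambda_t(\cN,\gamma)$ the restricted and unrestricted clans coincide, hence so do the kept/deleted classifications and the values $\eta^\Lambda_t(\gamma)=\eta_t(\gamma)$. Your write-up merely makes explicit the finite enumeration of alive points and the well-founded induction on the $\bK/\bD$ recursion that the paper leaves implicit.
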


\begin{proof}
 Take a realization $\cN$ such that $\bA^\varphi$ is finite
  for all $\varphi \in \Gamma\times \R\times \R^+$.  It suffices to show that for any $\gamma\in\Gamma$ and $t\in \R$,
  there exists a set $\Lambda_t(\cN,\gamma)$ such that if $\Lambda$ contains 
  $\Lambda_t(\cN,\gamma)$, then $\eta^\Lambda_t(\gamma) =
  \eta_t(\gamma)$. Take the point $(\gamma,t,s)$ ($s$ is irrelevant here) and define
  \begin{align}
    \Lambda_t(\cN,\gamma) := \bigcup_{(\gamma', t',s')\in \bA^{(\gamma,t,s)}}\{\gamma'\},
  \end{align}
the union of the supports of the cycles $\gamma'$ present in the clan of ancestors of $(\gamma,t,s)$.
  Now if $\Lambda$ contains  
 $\Lambda_t(\cN,\gamma)$, then the clan of ancestors restricted to $\Lambda$ is the same as the non-restricted clan: 
  $\bA^{(\gamma,t,s)}(\cN^\Lambda)= \bA^{(\gamma,t,s)}(\cN)$. This implies $\eta^\Lambda_t(\gamma)=\eta_t(\gamma)$ for all $\Lambda\supset\Lambda_t(\cN,\gamma)$. 
\end{proof}

\begin{theorem}
\label{uniqueness}
Uniqueness.

\noindent Assume $\bA^\varphi$ is finite for any point $\varphi=(\gamma,t,s)$, for almost all realizations of $\cN$. 
  Let $\nu$ be an invariant measure for the loss network dynamics defined by
  \eqref{generator-loss} supported on finite-cycle configurations.
  Then $\nu=\mu$, the law of the stationary process $\eta_t$ at any fixed time $t$. 
\end{theorem}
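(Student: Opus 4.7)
The plan is to show that any Markov process with generator \eqref{generator-loss} started from a law $\nu$ supported on finite-cycle configurations converges in distribution to $\mu$; invariance of $\nu$ then forces $\nu=\mu$. I would sample $\hat\eta_0\sim\nu$ independently of $\cN$, equip each cycle $\gamma'$ with $\hat\eta_0(\gamma')=1$ with an independent mean-one exponential lifetime $L_{\gamma'}$, and construct $(\hat\eta_t)_{t\ge 0}$ via the Harris graphical rule driven by the restriction of $\cN$ to $\R^+$: initial cycles die at $L_{\gamma'}$, while each point $(\gamma,t',s')\in\cN$ with $t'>0$ generates a birth of $\gamma$ at time $t'$ with lifetime $s'$ provided $\hat\eta_{t'^-}$ contains no cycle incompatible with $\gamma$. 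By invariance of $\nu$, $\hat\eta_t\sim\nu$ for every $t\ge 0$, whereas the stationary process $(\eta_t)_{t\in\R}$ from \eqref{def.eta} built from the same $\cN$ has marginal $\mu$.

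The main step is to prove that for every finite family $\gamma_1,\dots,\gamma_k\in\Gamma$,
\begin{equation*}
\lim_{t\to\infty}\P\bigl(\hat\eta_t(\gamma_i)\ne\eta_t(\gamma_i)\text{ for some }i\le k\bigr)=0.
\end{equation*}
Since the cylinder events $\{\eta(\gamma)=1\}$ generate the product $\sigma$-algebra on $\{0,1\}^\Gamma$, this, combined with $\hat\eta_t\sim\nu$ and $\eta_t\sim\mu$, forces $\nu=\mu$. I would define $F_t$ as the event that (a) for every Poisson point $\varphi=(\gamma_i,t',s')\in\cN$ with $i\le k$ and $t'\le t\le t'+s'$ the clan $\bA^\varphi$ is entirely contained in $\cN|_{\Gamma\times(0,t']\times\R^+}$, and (b) every initial cycle $\gamma''$ from $\hat\eta_0$ whose support meets a cycle $\gamma$ occurring in some $\bA^\varphi$ at birth time $t'$ satisfies $L_{\gamma''}<t'$. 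Because $\hat\eta_0$ is a permutation, at most one initial cycle passes through each site, so condition (b) involves only a (random) finite collection of independent exponential clocks; combined with a.s.\ finiteness of $\bA^\varphi$, time-stationarity of $\cN$, and the observation that the relevant birth times $t'$ drift to infinity together with $t$, this gives $\P(F_t)\to 1$.

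On $F_t$, I would order the finite clan of each $\varphi$ by birth time and induct from the oldest, arguing that the kept/deleted status of every clan point in $(\eta_t)$ coincides with the birth/no-birth status in $(\hat\eta_t)$. The oldest point $\psi_0$ has empty first-generation clan (since $\bA^\varphi$ is closed under taking first-generation ancestors and $\psi_0$ minimizes birth time), hence is automatically kept in $(\eta_t)$, and clause (b) ensures no initial cycle blocks its birth in $(\hat\eta_t)$. At a general step, the blockers of a point are earlier clan points (already matched by induction) and initial cycles (ruled out by (b)), so the two statuses agree; this yields $\hat\eta_t(\gamma_i)=\eta_t(\gamma_i)$ on $F_t$ for each $i\le k$, closing the proof. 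The main obstacle is the handling of these ``phantom ancestors'' from $\hat\eta_0$ that do not appear in $\cN$ and therefore in no $\bA^\varphi$; clause (b) is precisely designed to neutralize them, and its probability bound rests on the exponential tails of the $L_{\gamma'}$ combined with the a.s.\ finiteness of the random set of relevant initial cycles.
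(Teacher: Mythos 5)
Your proposal is correct and follows essentially the same route as the paper: both couple the $\nu$-started chain with the stationary loss network through the same Poisson process $\cN$, bound the discrepancy at a given cycle by the event that its clan of ancestors reaches back past the initialization time or is blocked by a still-alive initial cycle, and let a.s.\ finiteness of the clan kill both terms. The only difference is cosmetic: you run the coupled process forward from time $0$ and compare at time $t$, whereas the paper starts it at time $-t$ and compares at time $0$, using $\eta'_{[-t,0]}\overset{d}{=}\eta'_{[0,t]}$.
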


\begin{proof}
  Let $(\eta_t)_{t \in\R}$ denote the stationary loss
  network. Consider a family  $(s(\theta):\theta\in\Gamma)$ of iid random variables with exponential distribution of rate
  $1$. For any $\eta' \in \{0,1\}^\Gamma$ define
   \begin{align}
  \cI_{\eta'}(u):=\{(\theta,u,s(\theta)):\eta'(\theta)=1\}.
\end{align}
and the set of $\cN$-points born after $u$,
\begin{align}
  \cN_{[u,\infty)}=\{(\gamma,t',s)\in\cN: t'\ge u\}.
\end{align}
Let $(\eta'_{[u,t]}:{t\ge u})$ be the coupled loss network with initial configuration $\eta'_u=\eta'$ that
updates using the points in  $\cN_{[u,\infty)} $, and such that each initial cycle $\theta \in
  \eta'$ dies at time $u+s(\theta)$. Then 
\begin{align}
 ( \eta'_{[u,t]},\,t\ge u):= (\eta_t,\,t\ge u)(\cI_{\eta'}(u)\cup\cN_{[u,\infty)})\,.
\end{align}
Note that the distribution of $ \eta'_{[-t,0]}$ is the same as $\eta'_{[0,t]}$.

We can compare the latter process with the stationary process at time 0: for any $\gamma \in \Gamma$
\begin{align}
\label{compare}
  \big|\eta_{0}(\gamma) -\eta'_{[-t,0]}(\gamma)\big|\le 
\one\{\bA^{(\gamma,0,s)}\not\subset\cN_{[-t,\infty)}\}
+ \one\{\bA^{(\gamma,0,s)}\not\sim  \cI_{\eta'}(-t)\},
\end{align}
where the death time $s\in (\gamma, 0, s)$ is in fact irrelevant to the computation. Equation \eqref{compare} says that if each point in the clan of ancestors of $(\gamma,0,s)$ is born after time $-t$, and it is compatible with all points associated to the cycles $\theta$ in the initial configuration $\eta'$, then the cycle $\gamma$ belongs to both configurations or to none of them. 
Since the clan of ancestors is finite, for any cycle $\gamma$:
\begin{align}
  \lim_{t\to\infty} \big|\eta_{0}(\gamma) -\eta'_{[-t,0]}(\gamma)\big| = 0  \qquad a.s..
\end{align}
Sample a random $\eta'$ distributed according to the invariant measure $\nu$, then $\eta'_{[-t,0]}$ has law $\nu$ for all $t$ and $\eta'_{[-t,0]} (\gamma)\to \eta_{0}(\gamma)$ almost surely for all finite cycle $\gamma$. If $f:\{0,1\}^\Gamma \to\R$ is a bounded cylindrical function this implies $f(\eta'_{[-t,0]}) \to_{t\to\infty} f(\eta_{0})$ almost surely and
\begin{align}
  \nu f= \int \nu(d\eta') f(\eta'_{[-t,0]}) \;\mathop{\longrightarrow}_{t\to\infty}\;E f(\eta_{0})=\mu f\,,
\end{align}
i.e., $\nu=\mu$. 
\end{proof}

\paragraph{Conditions for the clan of ancestors to be finite}
The results of this section depend crucially on the hypothesis that the clan of ancestors of any point $\varphi \in \Gamma\times \R\times \R^+$ be finite for almost all realizations of 
$\cN$. 

Fix $\varphi=(\gamma,t,s)$. For each $\theta \in \Gamma$ and $n\ge 1$,  define 
\begin{equation*}
  \label{a1g}
  A_n(\gamma,\theta):=\big|\big\{(t',s')\,:\,(\theta,t',s')\in \bA_n^{(\gamma,t,s)}\big\}\big|,
\end{equation*}
the number of $\theta$-points $\in \cN$ that belong to the $n$-th generation of ancestors of 
$(\gamma,t,s)$, and denote the number of $\theta$-points in the clan of ancestors of $(\gamma,t,s)$ by 
\begin{align}
  A(\gamma,\theta):= \sum_{n\ge 0} A_n(\gamma,\theta). 
\end{align}
 The total size of the clan of ancestors is $|\bA^{(\gamma,t,s)}|=\sum_{\theta\in\Lambda}A(\gamma,\theta)$. We set conditions on $\uU$ and $\alpha$ that ensure this sum is finite.

\emph{Subcritical multitype branching process. }
We dominate the number of points in the clan of ancestors by
a branching process. Ancestors in the clan become descendants for the branching process, hence time runs backwards for the branching process.

Let $B_n$ be a discrete time
multitype branching process with type-space $\Gamma$ and offspring distribution
$A_1(\gamma,\theta)$. The number of children of type
$\theta$ in the $n$-th generation is defined by
$B_0(\gamma,\theta)=1\{\theta=\gamma\}$, and for $n\ge 0$,
\[
 B_{n+1}(\gamma,\theta) = \sum_{\gamma' \in \Gamma}\sum_{i=1}^{B_n(\gamma,\gamma')}A_{1,n+1,i}(\gamma',\theta)
\]
where $A_{1,n,i}(\gamma,\theta)$ are independent random variables with the same
distribution as $A_{1}(\gamma,\theta)$.  Let
$B(\gamma,\theta):=\sum_nB_n(\gamma,\theta)$ be the total number of descendants of type $\theta$ of a cycle $\gamma$. \begin{lemma}
\label{bb9}
$A(\gamma,\cdot)$ is stochastically dominated by $B(\gamma,\cdot)$.
\end{lemma}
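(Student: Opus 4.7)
The plan is to construct, on an enlarged probability space, a monotone coupling between the ancestor process $(\bA_n^{(\gamma,t,s)})_{n\ge 0}$ and the branching process $(B_n(\gamma,\cdot))_{n\ge 0}$, revealed generation by generation, such that every $\theta$-point in generation $n$ of the clan corresponds to at least one $\theta$-descendant in generation $n$ of the branching process. Summing over $n$ then yields $A(\gamma,\theta)\le B(\gamma,\theta)$ a.s.

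For the base case, I would verify that $A_1(\gamma,\theta)$ has exactly the offspring distribution used to define $B$. Given $\varphi=(\gamma,t,s)$, the $\theta$-points of $\bA_1^\varphi$ are the points $(\theta,t',s')\in\cN$ with $\theta\not\sim\gamma$ and $t'<t<t'+s'$; by the product structure of $\cN$ and the computation
\begin{equation*}
\int_{-\infty}^{t}\!\!\int_{t-t'}^{\infty} e^{-s'}\,ds'\,dt' \;=\; \int_{-\infty}^{t} e^{-(t-t')}\,dt' \;=\; 1,
\end{equation*}
the number $A_1(\gamma,\theta)$ is Poisson of mean $w(\theta)\mathbf 1\{\theta\not\sim\gamma\}$, and the counts for distinct $\theta$ are independent. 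This matches the law of the offspring variables in the definition of $B$.

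For the inductive step, I would enumerate $\bA_n^\varphi$ measurably as $\varphi_1,\dots,\varphi_m$ with $\varphi_i=(\gamma_i,t_i,s_i)$, and note that $\bA_1^{\varphi_i}$ is the restriction of $\cN$ to the region $R_i:=\{(\theta,t'',s''):\theta\not\sim\gamma_i,\;t''<t_i<t''+s''\}$, so that
\begin{equation*}
\bA_{n+1}^\varphi \;=\; \bigcup_{i=1}^{m}\bigl(\cN\cap R_i\bigr),
\end{equation*}
each point counted once. On the enlarged space, introduce independent Poisson copies $\cN^{(i)}$ of $\cN$ and define the branching-process offspring of $\varphi_i$ to be $\cN^{(i)}\cap R_i$; by independence this realizes the iid copies $A_{1,n+1,i}(\gamma_i,\theta)$ appearing in the definition of $B_{n+1}$. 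A standard Poisson thinning/enlargement argument couples $\cN$ to the collection $(\cN^{(i)})$ so that every point of $\cN\cap R_i$ is matched (for each $i$ it lies in) to one point of $\cN^{(i)}\cap R_i$, from which
\begin{equation*}
A_{n+1}(\gamma,\theta) \;\le\; \sum_{i=1}^{m} A_{1,n+1,i}(\gamma_i,\theta) \;=\; B_{n+1}(\gamma,\theta) \quad \text{a.s.}
\end{equation*}

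The main obstacle is the delicate bookkeeping in this last step: a single point of $\cN$ may be an ancestor of several $\varphi_i$ simultaneously (overlap of the regions $R_i$), and the same cycle type $\gamma'$ may appear several times in generation $n$; in both situations the clan undercounts while the branching process overcounts. The coupling must therefore preserve monotonicity from one generation to the next, and one must check measurability of the enumeration and the compatibility of the enlargement across generations. These are exactly the technicalities carried out in \cite{MR1849182}, and the argument transfers verbatim to the present setting of finite cycles since only the product-Poisson structure of $\cN$ and the region description of first-generation ancestors are used.
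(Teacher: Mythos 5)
Your overall strategy---a generation-by-generation coupling with the multitype branching process, the Poisson computation for the offspring law, and a union bound to handle points that are first-generation ancestors of several individuals of the same generation---is the standard one, and it is what the paper itself (equally tersely) gestures at before deferring to \cite{MR1707339} and \cite{MR1849182}. Your base case is correct and agrees with the paper's computation of $m(\gamma,\theta)$ in \eqref{bb3}.

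The inductive step, however, contains a genuine gap: the coupling you describe cannot be realized. You require copies $\cN^{(i)}$ that are independent of one another and of the exploration of the first $n$ generations (this independence is what makes them ``realize the iid copies $A_{1,n+1,i}$''), and simultaneously an injective matching of the $\theta$-points of $\cN\cap R_i$ into those of $\cN^{(i)}\cap R_i$ for every $i$. But $\bigl|\{\theta\hbox{-points of }\cN\cap R_i\}\bigr|$ and $\bigl|\{\theta\hbox{-points of }\cN^{(i)}\cap R_i\}\bigr|$ are both Poisson with the same mean $m(\gamma_i,\theta)$, so an almost sure inequality between them forces almost sure equality; the count of $\cN^{(i)}$ on $R_i$ would then be a deterministic function of $\cN$, contradicting the independence needed both across $i$ (the regions $R_i$ and $R_j$ overlap, and $\cN$ has common points in $R_i\cap R_j$ with positive probability) and from the past (each $R_i$ typically meets regions already revealed in earlier generations). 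No thinning or enlargement gets around this. The construction in \cite{MR1849182} is different in an essential way: one processes the individuals of a generation in a fixed order, keeps track of the already-explored region $E$, and declares the branching offspring of $\varphi_i$ to be the genuine points of $\cN$ in $R_i\setminus E$ together with \emph{freshly sampled} independent points on $R_i\cap E$ only. Conditionally on the past this is Poisson on all of $R_i$ with the correct intensity, independence across individuals holds because overlaps have by then been absorbed into $E$, and the domination holds because every clan point, at the moment it is first revealed, is a genuine offspring of the individual being processed. This is not mere bookkeeping; it is the step that makes the coupling exist at all. (For the purposes of Corollary \ref{a*i} one only needs $E[A_n(\gamma,\theta)]\le m^n(\gamma,\theta)$, which follows more directly by observing that the event $\{\varphi'\in\bA_n^{\varphi}\}$ depends on $\cN$ only through points born after the birth time of $\varphi'$, whereas $\cN\cap R_{\varphi'}$ involves only points born before it.)
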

\begin{proof}
  The branching process $B_{n+1}$ counts twice or more times those cycles $\theta$ in the
  $(n+1)$-th generation that intersect more than one $\gamma'$ on the $n$-th
  generation, while $A_{n+1}$ counts them only once. For details see \cite{MR1707339} and
  \cite{MR1849182}.
\end{proof}

We conclude that if the branching process is subcritical then the clan of ancestors is finite almost surely. 

\emph{Mean number of ancestors. } Let
\begin{equation*}
  \label{a2g}
  m(\gamma, \theta):=E\big[A_1(\gamma,\theta)\big].
\end{equation*}
By stationarity, the law of $A_1(\gamma,\theta)$ does not depend on $t$. Also, the property of 
being an ancestor of $(\gamma,t,s)$ is determined by the type $\gamma$ and its birth time $t$: $A_1(\gamma, \theta)$ does not depend on $s$.
The random variable 
$A_1(\gamma,\theta)$ has Poisson distribution with mean $m(\gamma, \theta)$. 

A point $(\theta,t',s')$
in the first generation of ancestors of $(\gamma,t,s)$ must satisfy $\theta\not\sim\gamma$,  $t'<t$ and  $s'\ge t-t'$. Hence,
\begin{equation}
  \label{bb3}
 m(\gamma, \theta)= w(\theta)\one\{\gamma \not\sim \theta\}
\int_{-\infty}^t\,dt' \int_{t-t'}^\infty\, ds' \, e^{-s'}= w(\theta)\one\{\gamma \not\sim \theta\}.
\end{equation}

Since $B_1(\gamma,\theta)$ has the same law as $A_1(\gamma,\theta)$, the mean matrix of the branching process is given by $m$ and the mean number of descendants of  type $\theta$ from an individual of type $\gamma$ 
  after $n$ branchings is given by
  \begin{align}
\label{mngt}
    E\big[B_{n}(\gamma,\theta)\big] = m^n(\gamma, \theta).
  \end{align}
\begin{lemma}
  \label{beta-alpha}
For fixed $\gamma\in\Gamma$ and $n\ge1$ the following inequality holds
\begin{equation}\label{malan}
 \sum_{n\ge 1}\sum_{\theta \in \Gamma} m^n(\gamma, \theta) \le \sum_{n\ge 1}|\gamma|\beta^n,
\end{equation}
where $\beta=\beta(\uU,\alpha)$ was defined in \eqref{beta}. In particular $\beta(\uU,\alpha)<1$ implies that the expected number of descendants of a finite cycle $\gamma$ is finite.
\end{lemma}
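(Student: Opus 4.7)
The plan is to bound each term $\sum_\theta m^n(\gamma,\theta)$ by $|\gamma|\beta^n$ and sum the resulting geometric bound over $n\ge 1$. The two ingredients are the explicit formula $m(\gamma,\theta)=w(\theta)\,\mathbf{1}\{\gamma\not\sim\theta\}$ established in \eqref{bb3}, and translation invariance of the cycle weight: since $w(\gamma)=\exp\{-\alpha\sum_{x\in\{\gamma\}}V(\gamma(x)-x)\}$ depends only on the displacements $\gamma(x)-x$, conjugation by $\tau_v$ preserves $w$, and hence for every $x\in\Z^d$,
\begin{equation*}
\sum_{\theta\,:\,\{\theta\}\ni x}F(\theta)\;=\;\sum_{\theta\,:\,\{\theta\}\ni \vec 0}F(\theta)
\end{equation*}
for any translation-invariant functional $F$ of the cycle.

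The naive induction $\sum_\theta m^n(\gamma,\theta)\le |\gamma|\beta^n$ does not close on itself, because iterating the bound produces a factor $|\theta'|$ coming from the intermediate cycle. I would therefore strengthen the inductive hypothesis and prove
\begin{equation*}
 S_n(\gamma)\;:=\;\sum_{\theta\in\Gamma}|\theta|\,m^n(\gamma,\theta)\;\le\;|\gamma|\,\beta^n,\qquad n\ge 1.
\end{equation*}
For $n=1$, using $\gamma\not\sim\theta$ iff $\{\gamma\}\cap\{\theta\}\neq\emptyset$ and the union bound over $x\in\{\gamma\}$,
\begin{equation*}
 S_1(\gamma)\;\le\;\sum_{x\in\{\gamma\}}\sum_{\theta\,:\,\{\theta\}\ni x}|\theta|\,w(\theta)\;=\;|\gamma|\sum_{\theta\,:\,\{\theta\}\ni \vec 0}|\theta|\,w(\theta)\;=\;|\gamma|\beta,
\end{equation*}
by translation invariance and the definition \eqref{beta} of $\beta$. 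For the inductive step, writing $m^{n+1}(\gamma,\theta)=\sum_{\theta'}m(\gamma,\theta')m^n(\theta',\theta)$ and interchanging sums,
\begin{equation*}
 S_{n+1}(\gamma)\;=\;\sum_{\theta'}m(\gamma,\theta')\,S_n(\theta')\;\le\;\beta^n\sum_{\theta'}|\theta'|\,m(\gamma,\theta')\;=\;\beta^n S_1(\gamma)\;\le\;|\gamma|\beta^{n+1}.
\end{equation*}

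Since every cycle $\theta\in\Gamma$ has $|\theta|\ge 1$, this gives $\sum_\theta m^n(\gamma,\theta)\le S_n(\gamma)\le |\gamma|\beta^n$, and summing over $n\ge 1$ yields \eqref{malan}. The final assertion follows because $\beta<1$ makes $\sum_{n\ge 1}|\gamma|\beta^n=|\gamma|\beta/(1-\beta)<\infty$, and by Lemma \ref{bb9} this bounds the expected total progeny $\sum_{n,\theta}E[B_n(\gamma,\theta)]=\sum_{n,\theta}m^n(\gamma,\theta)$, hence $E|\bA^{(\gamma,t,s)}|<\infty$.

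The only subtlety is recognizing that the induction must carry the weighting $|\theta|$ on the intermediate cycle; once that is in place the estimate closes multiplicatively because of the translation invariance of $w$ and the fact that the ``branching'' is triggered by overlap with $\{\gamma\}$, which contributes exactly the factor $|\gamma|$ through the union bound.
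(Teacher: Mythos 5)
Your proof is correct and follows essentially the same route as the paper: the paper also inserts the weight $|\theta|$ (bounding $\sum_\theta m^n(\gamma,\theta)\le\sum_\theta|\theta|m^n(\gamma,\theta)$) and iterates the same key inequality $\sum_{\theta\not\sim\gamma}|\theta|w(\theta)\le|\gamma|\beta$, writing it as a telescoping product of ratios $|\gamma_i|/|\gamma_{i-1}|$ rather than as your induction on $S_n(\gamma)$. The two presentations are equivalent, and your identification of the weighted quantity $S_n$ as the right inductive hypothesis is exactly the idea behind the paper's computation.
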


\begin{proof} Recall $m(\gamma,\theta)=\one\{\theta\not\sim\gamma\}\,w(\theta)$, and bound
\begin{eqnarray}
  \label{bb}
   \sum_{\theta \in \Gamma} m^n(\gamma, \theta)  
&\le&  \sum_{\theta \in \Gamma} |\theta| m^n(\gamma, \theta)\nonumber\\
&=&|\gamma|\sum_{\gamma_1 \not\sim \gamma} \frac{|\gamma_1|}{|\gamma|}w(\gamma_1) \, \sum_{\gamma_2
\not\sim \gamma_1} \frac{|\gamma_2|}{|\gamma_1|} w(\gamma_2) \cdots
\sum_{\theta \not\sim \gamma_{n-1}}
\frac{|\theta|}{|\gamma_{n-1}|}w(\theta)\nonumber\\
&\le&  |\gamma|\Bigl(\sum_{\theta \ni \vec0} |\theta|
w(\theta)\Bigr)^n = |\gamma|\beta^n,\label{bb1}
\end{eqnarray}
where the inequality in \eqref{bb1} follows from
\[
\sum_{\gamma':\gamma' \not\sim \gamma} |\gamma'|w(\gamma')\le |\gamma|  \sum_{\gamma'\ni\vec0}|\gamma'|w(\gamma'). \qedhere
\]
\end{proof}

\begin{corollary}
  \label{a*i} 
If $\alpha>\alpha^*(\uU)$, as defined in \eqref{alpha*}, then the clan of
ancestors of any point $(\gamma,t,s)$ is finite for almost all realizations of $\cN$.
\end{corollary}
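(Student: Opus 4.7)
The plan is to combine the stochastic domination of Lemma~\ref{bb9} with the first-moment bound of Lemma~\ref{beta-alpha}: once I show that the dominating multitype branching process $B(\gamma,\cdot)$ has finite expected total progeny, the sum $\sum_\theta B(\gamma,\theta)$ is almost surely finite, and by domination so is $|\bA^{(\gamma,t,s)}|=\sum_{\theta\in\Gamma}A(\gamma,\theta)$.

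First I note that, since $\beta(\uU,\cdot)$ is monotonically decreasing in $\alpha$ (as observed right after \eqref{alpha*}), the hypothesis $\alpha>\alpha^*(\uU)$ yields $\beta:=\beta(\uU,\alpha)<1$. Using $B_0(\gamma,\theta)=\one\{\theta=\gamma\}$ together with the mean formula \eqref{mngt} and the bound of Lemma~\ref{beta-alpha}, linearity of expectation gives
\begin{equation*}
E\Big[\sum_{\theta\in\Gamma}B(\gamma,\theta)\Big]
\;=\;1+\sum_{n\ge1}\sum_{\theta\in\Gamma}m^n(\gamma,\theta)
\;\le\;1+|\gamma|\sum_{n\ge1}\beta^n
\;=\;1+\frac{|\gamma|\beta}{1-\beta}\;<\;\infty,
\end{equation*}
so $\sum_{\theta\in\Gamma}B(\gamma,\theta)<\infty$ almost surely. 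Invoking the stochastic domination of Lemma~\ref{bb9}, the total size of the clan of ancestors, $|\bA^{(\gamma,t,s)}|=\sum_{\theta\in\Gamma}A(\gamma,\theta)$, is stochastically bounded above by $\sum_{\theta\in\Gamma}B(\gamma,\theta)$, and is therefore almost surely finite for the fixed point $\varphi=(\gamma,t,s)$.

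I do not expect any substantive obstacle here: the heavy lifting has already been carried out in the construction of the branching dominator (Lemma~\ref{bb9}) and the geometric control of its mean matrix (Lemma~\ref{beta-alpha}); the corollary is essentially the harvest step. The only mild subtlety is that the statement is phrased ``for any point $(\gamma,t,s)$''. Since the argument above applies to each fixed $\varphi$ and produces a full-probability event depending only on the law of $\cN$, and since $\Gamma$ is countable and the law of $\cN$ is stationary in $t$, one obtains simultaneous almost sure finiteness over all $\varphi\in\cN$ by a countable union bound — which is exactly what is required in the applications of this corollary in Section~\ref{s3}.
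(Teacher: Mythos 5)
Your proof is correct and follows essentially the same route as the paper: stochastic domination of the clan by the multitype branching process (Lemma~\ref{bb9}) combined with the first-moment bound of Lemma~\ref{beta-alpha}, which gives finite expected total progeny when $\beta(\uU,\alpha)<1$ and hence almost sure finiteness. You merely spell out the harvest step (finite expectation implies a.s.\ finiteness, and the countable union over points) in more detail than the paper does.
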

\begin{proof}
  By Lemma  \ref{bb9} the size of the clan of ancestors is dominated by the total population of the
  branching process. This population is finite if
  $\beta(\alpha)<1$ by Lemma \ref{beta-alpha}.
\end{proof}

\section{Proofs of the main theorems}
\label{s4}
\paragraph{Proof of Theorem \ref{t1}.}\

Let $\eta^\Lambda_t$ and $\eta_t=\eta^{\Z^d}_t$ be the stationary processes defined in \eqref{el} and \eqref{def.eta}. 
Fix an arbitrary time $t\in\R$ and consider the $t$ marginal of the process $\eta^\Lambda_t$. Define $\zeta_\Lambda\in S_\Lambda$ as the permutation with cycles indicated by $\eta^\Lambda_t\in\{0,1\}^{\Gamma_\Lambda}$, for each $\Lambda\subseteq \Z^d$. That is, $\gamma\in\zeta_\Lambda$ if and only if $\eta_t^\Lambda(\gamma)=1$.  

(i)  For finite $\Lambda$ the marginal distribution of the process $\eta^\Lambda_t$ at each fixed $t\in\R$ is $G_\Lambda$, as discussed following \eqref{el}.  

(ii) Thermodynamic limit: 
  Under the condition $\alpha>\alpha^*(\uU)$, Corollary \ref{a*i} implies that the clan of ancestors of any
  point in $\cN$ is finite with probability one. 
Then Theorem \ref{tl} implies the almost sure thermodynamic limit $\zeta_\Lambda(x)\to\zeta_{\Z^d}(x)$, as $\Lambda\nearrow\Z^d$, and hence $G_\Lambda\to\mu$, the law of $\eta^{\Z^d}_t$.

(iii)  By item (ii) $\mu$ is a weak limit of specifications, hence a Gibbs measure.  Since $\eta^{\Z^d}_t$ is a space-time translation invariant function of the Poisson process $\cN$,  the ergodicity of $\cN$ implies the spatial ergodicity of the law of $\eta^{\Z^d}_t$.  

(iv) Uniqueness follows from Theorem \ref{uniqueness}. \qed

 \paragraph{$\tau_v$ boundary conditions. Proof of Theorem \ref{t2}. }

Fix $v\in \Z^d$ and $\alpha>\alpha^*_v(\uU)$ and define the potential $V_v$ as in \eqref{vv}. 
Then $\uU_v(0)=0$ and $\uU_v$ is strictly convex because $\uU$ is. Also, $\alpha^*(\uU_v) =  \alpha^*_v(\uU) <\infty$ by hypothesis.
Hence $\uU_v$ satisfies the hypothesis of Theorem \ref{t1} and there exist a process $(\zeta_{\Lambda,v},\Lambda\subseteq\Z^d)$,
$\zeta_{\Lambda,v}\in S_\Lambda$, such that (i) to (iv) of that
theorem hold. 

(i) If $\Lambda$ is finite and $\sigma\in S_\Lambda$, 
\begin{align}
  P(\tau_v\zeta_{\Lambda,v}=\tau_v\sigma) &=P(\zeta_{\Lambda,v}=\sigma)\notag\\
&= \frac1{Z_{\Lambda,v}} \prod_{\gamma\in\sigma}
  \exp \Bigl\{\sum_{x\in\{\gamma\}} \uU_v(\gamma(x)-x)\Bigr\}\notag\\[2mm]
&= \frac1{Z_{\Lambda,v}} \prod_{\gamma\in\sigma}
  \exp \Bigl\{\sum_{x\in\{\gamma\}}
  \big[\uU(\gamma(x)+v-x)-\uU(v)\big]\Bigr\}\notag\\
&=G_{\Lambda|\tau_v}(\tau_v\sigma).
\end{align}
The remaining items follow from (i) and the statements (ii) to (iv) of Theorem \ref{t1}. \qed

\paragraph{The Gaussian potential. Proof of Theorem \ref{t3}. }

Assume $\uU$ is the Gaussian potential $\uU(x)=\|x\|^2$ in $\Z^d$. Let $\rho_0$ as in \eqref{rho-c} \, and $\alpha>0$ such that 
$\sum_{x\in\Z^d\setminus\{\vec0\}}\exp(-\alpha\|x\|^2)<\rho_0$. By Lemma \ref{rho} below this implies $\alpha^*(\|\cdot\|^2)<\infty$. We compute the explicit bound for $\alpha^*(\|\cdot\|^2)$ later in \eqref{explicit}. 

Fix $v\in \Z^d$.

(i)  Any permutation $\xi \in S_{\Lambda|\tau_v}$ is a finite perturbation of $\tau_v$ and by Lemma \ref{tau'}, $\tau_{-v}\xi=\gamma_1\dots\gamma_n$, a composition of disjoint finite cycles in $S_\Lambda$. We can then write
  \begin{align}
  G_{\Lambda|\tau_v}(\xi) &= \frac1{Z_{v,\Lambda}}\exp\Bigl\{-\alpha\sum_{\gamma\in\tau_{-v}\xi}\sum_{x\in\{\gamma\}} (\|\gamma(x)+v-x\|^2 -\|v\|^2)-\alpha\sum_{x\in\Lambda}\|v\|^2    \Bigr\}  \\
&= \frac1{Z'_{v,\Lambda}}\prod_{\gamma\in \tau_{-v}\xi} w_v(\gamma),
  \end{align}
where $Z'_{v,\Lambda}= Z_{v,\Lambda} \exp\{\alpha\sum_{x\in\Lambda} \|v\|^2\}$ and $w_v$ is defined in \eqref{weight}. If $\gamma$ is a cycle with support in $\Lambda$
\begin{align}
  \sum_{x\in\{\gamma\}} \big(\|\gamma(x)+v-x)\|^2-\|v\|^2\big)
&=\sum_{x\in\{\gamma\}} \|x-\gamma(x)\|^2 +2v \cdot \sum_{x\in\{\gamma\}} (x-\gamma(x)) \notag\\
&= \sum_{x\in\{\gamma\}} \|x-\gamma(x)\|^2
\end{align}
as $\sum_{x\in \{\gamma\}} x-\gamma(x)=0$. 
This implies $w_v(\gamma)=w(\gamma)$ defined in \eqref{weight},  and
$  G_{\Lambda|\tau_v}(\xi) = G_\Lambda(\tau_{-v}\xi)$ for $\xi\in S_{\Lambda|\tau_v}$. But this is equivalent to $G_{\Lambda|\tau_v}=G_\Lambda\tau_v$.

(ii) Take $v\in\Z^d$ and $\alpha>\alpha^*(\uU)$ and let $\zeta_\Lambda$ be as constructed in Theorem \ref{t1}. Since $\zeta_\Lambda$ has law
$G_\Lambda$, by (i), $ \tau_v\zeta_\Lambda$ has distribution
$G_{\Lambda|\tau_v}$.  The almost sure thermodynamic limit of item (ii), Theorem~\ref{t1} implies
\[
\lim_{\Lambda\nearrow\Z^d}\tau_v\zeta_\Lambda(x)= \tau_v\zeta_{\Z^d}(x)
\quad {\rm a.s.}, \quad\hbox{for all } x \in \Z^d.
\]
Now  $\zeta_{\Z^d}$ is distributed according to  $\mu$, then
\[
\lim_{\Lambda\nearrow\Z^d}G_{\Lambda|\tau_v} = \mu\tau_v 
\quad\hbox{weakly}. 
\]

(iii) The ergodicity of $\mu\tau_v$ follows from the ergodicity of $\mu$ proved in Theorem~\ref{t1}~(iii). \qed

\section{Bounds on $\alpha^*(\uU)$}
\label{s5}


\subsection{A general bound}
We start with a general bound. Following \cite{MR2360227}, define
\begin{equation}
  \label{rho-alpha}
  \rho(\uU,\alpha) := \sum_{x\in\Z^d\setminus\{\vec0\}} e^{-\alpha \uU(x)}\,.
\end{equation}
The proof of the following lemma is taken from the proof of Theorem 2.1  in
\cite{MR2360227}.  

\begin{lemma}
  \label{rho}
Call $\beta=\beta(\uU,\alpha)$ and let $\rho = \rho(\uU,\alpha)$. Then, 
\begin{equation}
  \label{bb7}
  \beta\;\le\;\frac{\rho}{\big[1-\rho\big]^2}-\rho.
\end{equation}
\end{lemma}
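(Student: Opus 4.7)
The plan is to prove the stronger per-length estimate
\[
\sum_{\substack{\gamma\in\Gamma,\ \{\gamma\}\ni\vec 0 \\ |\gamma|=n}} w(\gamma) \;\le\; \rho^n \qquad \text{for each } n\ge 2,
\]
from which the lemma follows by summing against the factor $|\gamma|=n$ in the definition of $\beta$:
\[
\beta \;=\; \sum_{n\ge 2} n \sum_{|\gamma|=n,\ \vec 0\in\{\gamma\}} w(\gamma) \;\le\; \sum_{n\ge 2} n\rho^n \;=\; \frac{\rho}{(1-\rho)^2}-\rho,
\]
using the standard identity $\sum_{n\ge 1} n\rho^n = \rho/(1-\rho)^2$.

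To establish the per-length bound I would parametrize each finite cycle $\gamma$ of length $n$ containing $\vec 0$ by the orbit of $\vec 0$ under $\gamma$. Explicitly, setting $x_0=\vec 0$ and $x_k:=\gamma^k(\vec 0)$ for $k=1,\dots,n-1$ (so $\gamma^n(\vec 0)=x_n=\vec 0$), one obtains a bijection between such cycles and ordered tuples $(x_1,\dots,x_{n-1})$ of pairwise distinct elements of $\Z^d\setminus\{\vec 0\}$. Under this identification the weight factors as
\[
w(\gamma) \;=\; \prod_{i=1}^{n} e^{-\alpha \uU(x_i-x_{i-1})}.
\]

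Next I would change variables to the increments $y_i:=x_i-x_{i-1}$ for $i=1,\dots,n$. Pairwise distinctness of $x_0,\dots,x_{n-1}$ forces every partial sum $y_1+\cdots+y_k$ to be distinct from the others, and in particular $y_i\neq\vec 0$; the closure $x_n=\vec 0$ becomes $y_1+\cdots+y_n=\vec 0$. Simply dropping both the partial-sum distinctness constraint and the closure constraint gives the clean upper bound
\[
\sum_{|\gamma|=n,\ \vec 0\in\{\gamma\}} w(\gamma) \;\le\; \sum_{y_1,\dots,y_n\in\Z^d\setminus\{\vec 0\}} \prod_{i=1}^{n} e^{-\alpha \uU(y_i)} \;=\; \Bigl(\sum_{y\neq \vec 0} e^{-\alpha \uU(y)}\Bigr)^{\!n} \;=\; \rho^n,
\]
which closes the proof. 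There is no real obstacle; the only cost is that two substantive constraints are discarded at once, but this is precisely what makes the bound factorize into $\rho^n$, and it is in any case consistent with the form of the target inequality.
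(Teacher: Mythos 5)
Your argument is correct and is essentially the paper's own proof: both parametrize a cycle through $\vec 0$ by the ordered orbit $x_0=\vec 0, x_1,\dots,x_{n-1}$, pass to the increments $y_i=x_i-x_{i-1}$, discard the distinctness and closure constraints to get the factorized bound $\rho^n$ for each length $n$, and then sum $\sum_{n\ge 2}n\rho^n=\rho/(1-\rho)^2-\rho$. No substantive difference.
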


\begin{proof}
  Compute 
  \begin{eqnarray}
\label{bb4}
  \beta&=& \sum_{\theta \ni\vec0}|\theta| w(\theta) 
\;=\; \sum_{n\ge 2} n \sum_{\theta \ni\vec0;\,|\theta|=n} w(\theta)\,.
\end{eqnarray}
The second sum, indexed by $\theta$, on the right of \eqref{bb4} can be re-written as
\begin{equation}
\sum_{x_1,\dots,\,x_{n+1}\in\Z^d}\;\one\big\{x_1=x_{n+1}=0;\,x_i\neq
x_j\,,\;i,j\in\{1,\dots,n\}\big\}\; 
\prod_{i=1}^{n}e^{-\alpha \uU(x_{i+1}-x_{i})}\,.   \label{a66}
\end{equation}
Dominate the indicator function in \eqref{a66} by 
$\one\big\{x_1=0;\;x_i\neq x_{i+1}\,,\; i\in\{1,\dots,n\}\big\}$, to dominate
\eqref{a66}  by
\[
\sum_{y_1,\dots,\,y_n\in\Z^d\setminus\{\vec0\}}\; \prod_{i=1}^{n}e^{-\alpha \uU(y_{i})}
\;=\; \Bigl(\sum_{x\in\Z^d\setminus\{\vec0\}}  e^{-\alpha \uU(x)}\Bigr)^n=\rho^n\,.
\]
We conclude that $\beta\le \sum_{n\ge 2}n\rho^n$ which is equivalent to \eqref{bb7}.
\end{proof}
Let
\begin{equation}
  \label{rho-c}
\rho_0 \hbox{ be  the unique solution $r\in[0,1]$ to }\;\frac{r}{(1-r)^2}-r=1.
\end{equation}
Solving the equation one gets $\rho_0\approx 0.44504$.

\begin{corollary}
\label{8888}
  If $\rho(\uU,\alpha)<\rho_0$ then  $\beta(\uU,\alpha)<1$. In particular,
\begin{equation}
  \label{alphac}
\alpha^*(\uU )\le \inf\{\alpha>0: \rho(\uU,\alpha)\le \rho_0\}\,.
\end{equation}

\end{corollary}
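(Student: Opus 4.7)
The plan is to derive both claims directly from Lemma \ref{rho} by a monotonicity argument on the function
\[
f(r) := \frac{r}{(1-r)^2} - r.
\]
First I would verify that $f$ is continuous and strictly increasing on $[0,1)$ with $f(0)=0$ and $f(r)\to\infty$ as $r\uparrow 1$. This is a short routine check (either differentiate, or note that $r\mapsto r(1-r)^{-2}$ is a product of positive increasing functions and subtracting $r$ preserves monotonicity once one checks the derivative is positive). By continuity and strict monotonicity the equation $f(r)=1$ has a unique root in $(0,1)$, which is the $\rho_0$ defined in \eqref{rho-c}.

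For the first assertion, assume $\rho(\uU,\alpha)<\rho_0$. Lemma \ref{rho} gives $\beta(\uU,\alpha)\le f(\rho(\uU,\alpha))$, and monotonicity of $f$ together with $f(\rho_0)=1$ yields
\[
\beta(\uU,\alpha)\le f(\rho(\uU,\alpha))<f(\rho_0)=1,
\]
as required.

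For the second assertion, I would use that $\rho(\uU,\alpha)$ is a sum of strictly decreasing, continuous functions of $\alpha$, so it is itself strictly decreasing and continuous in $\alpha$ on the range where it is finite. Set
\[
\alpha_0:=\inf\{\alpha>0:\rho(\uU,\alpha)\le\rho_0\}.
\]
For any $\alpha>\alpha_0$ one can pick $\alpha'\in(\alpha_0,\alpha)$ with $\rho(\uU,\alpha')\le\rho_0$, and then strict monotonicity gives $\rho(\uU,\alpha)<\rho(\uU,\alpha')\le\rho_0$. By the first part, $\beta(\uU,\alpha)<1$, so $\alpha\ge\alpha^*(\uU)$ by definition \eqref{alpha*}. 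Taking the infimum over such $\alpha$ yields $\alpha^*(\uU)\le\alpha_0$, which is \eqref{alphac}.

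There is no real obstacle; the only subtle point is the handling of the non-strict versus strict inequalities at $\rho=\rho_0$, which is resolved by the strict monotonicity of $\rho(\uU,\cdot)$ so that moving $\alpha$ past $\alpha_0$ turns the weak inequality $\rho\le\rho_0$ into the strict one $\rho<\rho_0$ needed to apply the first part.
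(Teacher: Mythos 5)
Your proof is correct and follows the route the paper intends: the corollary is stated as an immediate consequence of Lemma \ref{rho} and the definition of $\rho_0$, and your monotonicity argument for $f(r)=r(1-r)^{-2}-r$ together with the strict decrease of $\rho(\uU,\cdot)$ is exactly the fleshed-out version of that. The handling of the weak versus strict inequality at $\alpha_0$ via strict monotonicity is the right way to close the one small gap the paper leaves implicit.
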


\subsection{Examples} 

\paragraph{The Gaussian potential} 
In this case
\begin{align}
\rho(\|\cdot\|^2,\alpha) 
&=  \sum_{z\in\Z^d\setminus\{\vec0\}}e^{-\alpha \|z\|^2}
= \Bigl(\sum_{k\in\Z}e^{-\alpha k^2}\Bigr)^d-1  \notag\\
&
\le \Bigl(1+\int_{-\infty}^\infty e^{-\alpha x^2}dx\Bigr)^d-1 = \Bigl(1+\sqrt{\pi/\alpha}\Bigr)^d-1,
\end{align}
which implies the following explicit bound for $\alpha^*$:
\begin{align}
\label{explicit}
  \alpha^*(\|\cdot\|^2)\le \pi\big((\rho_0+1)^{1/d}-1\big)^{-2}\approx \pi\big(1.44504^{1/d}-1\big)^{-2}.
\end{align}
For $d=2$ this gives $\alpha^*\le 76.9176$; for $d=3$, $\alpha^*\le 184.305\,$. 

\paragraph{Differentiable, strictly convex potentials} Let $\uU: \R^d\to \R^+\cup\{+\infty\} $ be a potential such that for each $v\in \Z^d$ there exists a constant $m(v)>0$ satisfying 
\begin{align}
\label{m1v}
\uU(y)\ge \uU(x)+\nabla \uU(x)^T\cdot(y-x)+m(v)\|y-x\|^2\,\qquad \mbox{for any } x,\,y \in \R^d.
\end{align}
Then, for any cycle $\gamma \in  \Gamma$,
\begin{align}
\label{m2v}
\sum_{x\in \gamma}\uU(\gamma(x)+v-x)-\uU(v)\ge m(v)\sum_{x\in \gamma}\|\gamma(x)-x\|^2,
\end{align}
and $\alpha^*_v(\uU)\le \frac{1}{m(v)} \,\alpha^*(\|\cdot\|^2)$.
 
In particular, if $V$ is a strongly convex potential then \eqref{m1v} holds with a constant $m$ uniformly in $v\in \Z^d$, and 
$\alpha^*_v(\uU)\le \frac{1}{m} \,\alpha^*(\|\cdot\|^2)$. For instance, in $1$-dimension, $x^2$ and $e^{x^2}$ are strongly convex potentials.

\paragraph{Polynomial potentials} Let $\uU:\R^d\to \R^+$ be a strictly convex polynomial, $\uU(\vec0)=0$, with a positive definite Hessian at all points. 
Given $v\in \Z^d$, there exists $b(v)>0$
such that
\begin{equation}
\label{large}
\big[\uU(v+y)-\uU(v)-\nabla \uU(v)\cdot y\big] \,\one_{\|y\|\ge b(v)} \ge \frac{1}{2}\, \uU(y)\, \one_{\|y\|\ge b(v)}
\end{equation}
Let now $\gamma \in \Gamma$, and write
\begin{align*}
&\sum_{x\in \{\gamma\}} \uU(\gamma(x)-x+v)-\uU(v)\\
&\hspace{.1cm}=\sum_{x\in \{\gamma\}} \uU(\gamma(x)-x+v)-\uU(v)-\nabla \uU(v) \cdot (\gamma(x)-x)=I_1+I_2
\end{align*}
with
\begin{align*}
&I_1=\sum_{x\in \{\gamma\},\,\|x-\gamma(x)\|<b(v)} \uU(\gamma(x)-x+v)-\uU(v)-\nabla \uU(v) \cdot (\gamma(x)-x)\\
&I_2=\sum_{x\in \{\gamma\},\,\|x-\gamma(x)\|\ge b(v)} \uU(\gamma(x)-x+v)-\uU(v)-\nabla \uU(v) \cdot (\gamma(x)-x)
\end{align*}
By \eqref{large}
\begin{equation}
\label{cota1}
I_2\ge \,\frac{1}{2} \sum_{x\in \{\gamma\},\,\|x-\gamma(x)\|\ge b(v)} \uU(\gamma(x)-x).
\end{equation}
On the other hand, since the set $\{y\in \Z^d, \|y-v\|<b(v)\}$ is finite and the Hessian $H\uU=\big(\frac{\partial ^2 V}{\partial x_i \partial x_j}\big)_{1\le i,j\le d}$ is positive definite at all points, there exists $m>0$ such that
\[
\uU(\gamma(x)-x+v)-\uU(v)-\nabla \uU(v) \cdot (\gamma(x)-x)\ge m\, \|\gamma(x)-x\|^2
\] 
for all $\|\gamma(x)-x\|<b(v)$. As a result,
\begin{equation}
\label{cota2}
I_1\ge \,m \sum_{x\in \{\gamma\},\,\|x-\gamma(x)\|<b(v)} \|\gamma(x)-x\|^2.
\end{equation}
Finally, for any integer neighborhood of the origin that excludes the origin itself there exists another constant $m'$ such that $m' \uU(x)\le \|x\|^2$. Together with (\ref{cota1}, \ref{cota2}), we obtain 
\begin{equation*}
\sum_{x\in \{\gamma\}} \uU(\gamma(x)-x+v)-\uU(v)\ge C(v) \sum_{x\in \{\gamma\}} \uU(\gamma(x)-x)
\end{equation*}
for some constant $C(v)>0$, and $\alpha^*_v(\uU)\ge \,\frac{1}{C(v)}\,
\alpha^*(\uU)$, where $\alpha^*(\uU)$ can be bounded as in Corollary \ref{8888}. 

Finally, note that some naturally arising polynomial potentials such as $\uU(x)=x^4$ fail to have a positive definite Hessian at all points. In this case the above argument still applies, provided the set of points where the Hessian is not positive definite does not affect the computation leading to \eqref{cota2}. In other words, one just needs to check that the 
Hessian appearing in the remainder term of the 1st degree Taylor expansion of
$\uU(z)$ around $v$ is positive definite, for all (finitely many) integer points
$z$ in the neighborhood $\|z-v\|<b(v)$. For instance, in the case of
$\uU(x)=x^4$, or its $d$-dimensional version $\uU(x)=\|x\|^4$, the Hessian fails to be
 positive definite only at the origin, and the argument works fine.

\paragraph{Finite-range potentials}
We say that a potential $\uU$ is \emph{finite-range} if $\uU(x)=\infty$ for all but a finite number of $x$'s. 
 Consider for instance the nearest neighbor potential $\uU:\Z^2\to\{0,1,\infty\}$ defined by $\uU(0)=0$, $\uU(x)=1$ if $\|x\|=1$ and $\uU(x)=\infty$ if $\|x\|>1$. In $d=2$ the specifications for the permutations are very similar to the ones for the Peierls contours of the Ising model. There are some differences between these models: (a)  while contours may have self intersections, cycles are not allowed to; (b) two-point cycles do not determine a contour, and (c) given the set of at least 3 sites in the support of a cycle, there are two possible ways of going through them, clockwise and counter clockwise. Other than these observations,  the approach works exactly as in the contour case studied in \cite{MR1849182}.

\section*{Acknowledgments}
We are grateful to both referees for several comments that helped improve the paper. I.A.\/ would like to thank Stefan Grosskinsky and Daniel Ueltschi for
many fruitful discussions as well as for their warm welcome to the University
of Warwick.  This research has been supported by the grant PICT
2012-2744 ``Stochastic Processes and Statistical Mechanics", the
project UBACyT 2013-2016 20020120100151BA and the MathAmSud project 777/2011 ``Stochastic Structure of Large Interactive Systems". F.L.\/ is
partially supported by the CNPq-Brazil fellowship 304836/2012-5 and a L'Or\'eal
Fellowship for Women in Science. This article was produced as part of the activities of FAPESP  Research, Innovation and Dissemination Center for Neuromathematics, grant
2013/07699-0, S\~ao Paulo Research Foundation.

\bibliography{afgl.bib}

\bibliographystyle{plain}

\end{document}